\newtheorem{thm}{Theorem}[section]
\newtheorem{lem}[thm]{Lemma}
\newtheorem{pro}[thm]{Proposition}
\theoremstyle{remark}
\newtheorem{rem}[thm]{Remark}
\theoremstyle{definition}
\newcommand{\mC}{\mathbb C}
\newcommand{\mH}{\mathbb H}
\newcommand{\mR}{\mathbb R}
\newcommand{\mZ}{\mathbb Z}
\newcommand{\cP}{\mathcal P}
\newcommand{\cD}{\mathcal D}
\newcommand{\cM}{\mathcal M}
\newcommand{\fg}{\mathfrak g}
\newcommand{\fh}{\mathfrak h}
\newcommand{\fc}{\mathfrak c}
\newcommand{\ft}{\mathfrak t}
\newcommand{\fv}{\mathfrak v}
\newcommand{\fl}{\mathfrak l}
\newcommand{\fC}{\mathfrak C}
\newcommand{\fL}{\mathfrak L}
\newcommand{\rH}{\mathrm H}
\newcommand{\rd}{\mathrm d}
\begin{document}

\title[The contact mappings of a flat $(2,3,5)$-distribution]{The contact mappings of a flat $(2,3,5)$-distribution}
\author{Alex D. Austin}
\date{\today}
\begin{abstract}
Let $\Upomega$ and $\Upomega'$ be open subsets of a flat $(2,3,5)$-distribution. We show that a $C^1$-smooth contact mapping $f : \Upomega \to \Upomega'$ is a $C^\infty$-smooth contact mapping. Ultimately, this is a consequence of the rigidity of the associated stratified Lie group (the Tanaka prolongation of the Lie algebra is of finite-type). The conclusion is reached through a careful study of some differential identities satisfied by components of the Pansu-derivative of a $C^1$-smooth contact mapping. 
\end{abstract}
\maketitle

\section{Introduction}\label{sec:intro}

Let $\cM$ be a connected $C^\infty$-smooth manifold of dimension $n$. A distribution on $\cM$ is a sub-bundle $\mathrm{H}\cM$ of the tangent bundle $\mathrm{T}\cM$ determined by a collection of $C^\infty$-smooth vector fields $X_1, \ldots, X_d$ via $\mathrm{H}_p \cM = \mathrm{span} (X_1|_p , \ldots, X_d|_p)$. Since $\cM$ is connected and $\rH \cM$ is a vector bundle, $\dim(\rH_p \cM)$ is independent of $p$ and called the rank of the distribution.

Suppose $\cM$ is a manifold equipped with a distribution $\mathrm{H}\cM$ as above. Let $\Upomega \subset \cM$ be open. We write $\fv^k (\Upomega)$ for the $C^k$-smooth sections of $\mathrm{T}\Upomega$ and $\fv_{\mathrm{H}}^k (\Upomega)$ for the $C^k$-smooth sections of $\mathrm{H}\Upomega$. Set $\fv_1 = \fv_{\mathrm{H}}^\infty (\cM)$, and define by iteration $\fv_k = \fv_{k-1} \oplus [\fv_1 , \fv_{k-1}]$. Let $\Gamma^k$ be determined by $\Gamma_p^k = \mathrm{span}\{ V|_p \,:\, V \in \fv_k \}$. The distribution $\mathrm{H}\cM$ is called regular if $\Gamma^k$ is a distribution for all $k$.

When $\mathrm{H}\cM$ is regular, there exists a unique positive integer $s$ such that
\[
	\mathrm{H}\cM = \Gamma^1 \subsetneq \Gamma^2 \subsetneq \cdots \subsetneq \Gamma^s
\]	
and $\Gamma^k = \Gamma ^s$ for all $k > s$. If we set $\fl_1 (p) = \Gamma_p^1$, and for $k\geq 2$, $\fl_k (p) = \Gamma_p^k / \Gamma_p^{k-1}$, then $\fl(p) = \bigoplus_{k\geq 1}\fl_k (p)$ can be given a bracket as in \cite[p.~9]{Tanaka1} making it a stratified Lie algebra of step $s$. See Section \ref{sec:basics} for the definition of a stratified Lie algebra (of step $s$) and the early sections of \cite{Tanaka1}, \cite{Yamaguchi} for more details on the objects of this paragraph. If there exists a stratified Lie algebra $\fl$ such that $\fl(p)$ is isomorphic to $\fl$ at every $p \in \cM$ then the pair $(\cM,\mathrm{H}\cM)$ is called of type $\fl$.  Somewhat conversely, if $\fL$ is a Lie group such that its Lie algebra $\fl = \bigoplus_{k\geq 1}\fl_k$ is stratified of step $s$, then $\fl_1$ determines a regular distribution $\mathrm{H}\fL$ on $\fL$ and $(\fL, \mathrm{H}\fL)$ is of type $\fl$.      

Let $\cM, \cM'$ be $C^\infty$-smooth manifolds and let $\Upomega \subset \cM$ be open. We call $f: \Upomega \to \cM'$ a $C^k$-diffeomorphism if $f : \Upomega \to f(\Upomega)$ is a homeomorphism and both $f$ and $f^{-1}$ are $C^k$-smooth. When $\cM, \cM'$ are connected and $\mathrm{H}\cM, \mathrm{H}\cM'$ are distributions on $\cM, \cM'$, respectively, we call $f: \Upomega \to \cM'$ a ($C^k$-smooth) contact mapping if it is a $C^k$-diffeomorphism for some $k\geq 1$ and $Df (\mathrm{H}\Upomega) = \mathrm{H}f(\Upomega)$. This implies $f_* \fv_{\mathrm{H}}^\infty (\Upomega) \subset \fv_{\mathrm{H}}^\infty \left( f(\Upomega)\right)$. If $f : \Upomega \to \cM'$ is a contact mapping then so too is $f^{-1} : f(\Upomega) \to \cM$. The pairs $(\cM,\mathrm{H}\cM)$ and $(\cM',\mathrm{H}\cM')$ are called locally equivalent if for all $p \in \cM$, and for all $q \in \cM'$, there exist open $\Upomega \subset \cM$ with $p \in \Upomega$, open $\Upomega' \subset \cM'$ with $q \in \Upomega'$, and a $C^\infty$-smooth contact mapping $f : \Upomega \to \Upomega'$.

The pair $(\cM,\mathrm{H}\cM)$ is called a $(2,3,5)$-distribution if $\cM$ is a connected $C^\infty$-smooth manifold with $\dim(\cM)=5$ and $\mathrm{H}\cM$ is a regular distribution on $\cM$ satisfying $\mathrm{rank}(\mathrm{H}\cM) = \mathrm{rank}(\Gamma^1) = 2$, $\mathrm{rank}(\Gamma^2) = 3$, and $\mathrm{rank}(\Gamma^3) = 5$. These were studied by Cartan in his famous ``five variables'' paper \cite{CartanFiveVariables}. This work claimed a classification long accepted as complete until a missing case was discovered by Doubrov and Gogorov in 2013 \cite{MissingFromCartan}. See also the commentary in \cite{Willse}. Cartan discussed $(2,3,5)$-distributions in the context of space curves of constant torsion. They have since been linked to the symmetries of rolling balls \cite{RollingBalls} or, if you prefer, a rolling spinor \cite{RollingSpinor}.

A $(2,3,5)$-distribution is called flat if a certain invariant (quantity preserved under local equivalence) vanishes. Every flat $(2,3,5)$-distribution is locally equivalent to the Lie group we study in Section \ref{sec:235} that is commonly called the Cartan group (and we follow suit). The Cartan group $\fC$ is a rigid stratified Lie group (definitions in Section \ref{sec:basics}). The main purpose of this paper is to prove the following.

\begin{thm}\label{thm:main}
Let $\Upomega \subset \fC$ be open. If $f : \Upomega \to \fC$ is a $C^1$-smooth contact mapping then $f$ is $C^\infty$-smooth.
\end{thm}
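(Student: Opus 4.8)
The plan is to reduce the statement to a regularity question for an overdetermined first-order system satisfied by the components of the Pansu-derivative of $f$, and then to bootstrap. Fix a basis $X_1, X_2$ of $\fl_1$ and complete it to a left-invariant frame $X_1, X_2, X_3 = [X_1,X_2], X_4 = [X_1,X_3], X_5 = [X_2,X_3]$ of $\rT\fC$ adapted to the stratification $\fl = \fl_1\oplus\fl_2\oplus\fl_3$; in suitable global coordinates on $\fC$ the $X_j$ have polynomial coefficients $X_j^m$. Since $f$ is a $C^1$-smooth contact mapping, $Df_p$ preserves the canonical filtration at every point, so it is determined by its restriction $A(p)\colon\fl_1\to\fl_1$, a continuous $GL_2(\mR)$-valued function, and the Pansu-derivative of $f$ at $p$ is the graded automorphism of $\fC$ extending $A(p)$. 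Here rigidity first enters: the grading-preserving automorphisms of $\fl$ form precisely the finite-dimensional group $GL_2(\mR)$, so the $1$-jet of $f$ at $p$ is faithfully encoded by the pair $\bigl(f(p),A(p)\bigr)$. Written out in coordinates, the contact condition is the system
\begin{equation}\tag{$\star$}
	X_i f^m = \sum_{j=1}^{2} A_{ij}\, X_j^m(f), \qquad i=1,2,\ \ m=1,\dots,5,
\end{equation}
whose right-hand side is polynomial in $A$, $x$ and $f$; inverting the block $m=1,2$ expresses $A$ as a polynomial in the horizontal first derivatives of $f$, so $A$ is precisely one derivative less regular than $f$.

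Next I would extract the differential identities. Differentiating $(\star)$ along $X_1$ and $X_2$, using $[X_1,X_2]=X_3$, and substituting $(\star)$ itself to remove first derivatives of $f$, one expresses every iterated horizontal derivative $X_\alpha X_\beta f^m$ with $\alpha,\beta\in\{1,2\}$ --- and hence $X_3 f^m$ --- as a polynomial in $(A,X_1A,X_2A,x,f)$. Iterating, each time differentiating the relations just obtained and using the bracket relations $X_4=[X_1,X_3]$, $X_5=[X_2,X_3]$ and the remaining Jacobi identities of $\fl$, one arrives after finitely many steps at formulas expressing $X_kA$ for \emph{all} $k=1,\dots,5$, equivalently the full coordinate gradient of $A$, as polynomials in $(A,x,f)$ with no new unknowns. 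The termination of this tower is exactly the rigidity of $\fC$: the Tanaka prolongation of $\fl$ is the $14$-dimensional exceptional Lie algebra of type $G_2$, so its positively-graded part vanishes beyond degree three, and it is this vanishing that forces the system of differential consequences of $(\star)$ to close. Combining this with $(\star)$, one obtains a closed first-order total differential system
\begin{equation*}
	\partial_l(A,f) = G_l(A,x,f), \qquad l=1,\dots,5,
\end{equation*}
with $C^\infty$, indeed polynomial, right-hand side.

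The last step is the bootstrap. A little care is needed initially because $A$ is a priori only continuous: one must check that $(\star)$ and the contact condition already force the distributional derivatives of $A$ to agree with the continuous functions $G_l$, which yields $A\in C^1$ and hence $f\in C^2$. From there the closed system immediately improves $(A,f)$ from class $C^k$ to class $C^{k+1}$ for every $k$, so $f\in C^\infty$ --- and, since the $G_l$ are polynomial, real-analyticity of $f$ would follow by the same argument. I expect the main difficulty to be computational rather than conceptual: because $\fl$ has step three with a two-dimensional top layer, there are many component functions and Jacobi constraints to keep track of when differentiating $(\star)$, and the bookkeeping has to be arranged so that it is manifest that the tower of identities closes at the level predicted by finite-type. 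A secondary subtlety is the $C^1\to C^2$ passage, where one must exploit the overdetermined (finite-type) structure of the system, rather than any ellipticity, to justify differentiating the relations at all.
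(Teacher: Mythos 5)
There are two genuine gaps here, one structural and one analytic. The structural one: the closed first-order system $\partial_l(A,f)=G_l(A,x,f)$ that you claim to reach ``with no new unknowns'' cannot exist. A $C^1$ solution of such a system on a connected domain is determined by the initial data $\bigl(f(p),A(p)\bigr)$, hence by at most $5+4=9$ parameters; but the local contact transformations of the flat $(2,3,5)$-distribution form a $14$-dimensional family (the split $\fg_2$), precisely because the positive part $\ft_1(\fc)\oplus\ft_2(\fc)\oplus\ft_3(\fc)$ of the Tanaka prolongation is nonzero, of dimension $5$. Equivalently, there is a $5$-parameter family of contact maps sharing the same $1$-jet at a given point. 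So when you differentiate $(\star)$ and decompose $Df(X_3)$, $Df(X_4)$, $Df(X_5)$ against the frame, new unknown coefficient functions necessarily appear (they correspond to $\ft_1\oplus\ft_2\oplus\ft_3$), and the tower closes only after prolonging through those extra variables. This is repairable in principle, but it is not the system you wrote down, and the ``bookkeeping'' you identify as the main difficulty is organized around the wrong set of unknowns. (Also, the fact that the graded automorphisms form $GL_2(\mR)$ is not where rigidity enters; $\ft_0$ is finite-dimensional for every stratified algebra. Rigidity is the vanishing of $\ft_i$ for large $i$.)

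The analytic gap is the more serious one for the stated theorem. The step you call a ``secondary subtlety'' --- showing that the distributional derivatives of $A$ coincide with continuous functions --- is the entire content of the result: for $C^2$-smooth contact maps the conclusion is already known from \cite{OttazziWarhurstGeneral}, so everything hinges on how one is allowed to differentiate relations whose terms are a priori only continuous. Your system $(\star)$ is purely horizontal, and no formal manipulation of $(\star)$ alone produces a continuous representative of $X_kA$; one needs an actual cancellation mechanism. The paper's is Proposition \ref{pro:c1vfc}: after mollifying, the combination $X_2\cP_{4,5}^\epsilon-Z_2\cP_{4,2}^\epsilon$ contains no second derivatives of $f^\epsilon$, and since $(\cP f)_{4,2}\equiv 0$ and $Z_2$ is central one can pass to the limit and read off $X_2(\cP f)_{4,5}$ as an explicit continuous expression in first derivatives. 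Note that this uses the full Euclidean $C^1$-regularity of $f$ (derivatives along $Y$, $Z_1$, $Z_2$), which your setup never invokes. The paper then bypasses the Frobenius bootstrap altogether: it shows that the pushforwards of the central fields $Z_1,Z_2$ are contact \emph{generalized} vector fields (Proposition \ref{pro:inducedcontactvf}), applies the polynomial rigidity of such fields (Proposition \ref{pro:contactvfpoly}) to conclude that $X_jg_i/J_H^2 g$ is a polynomial, and finishes with hypoellipticity of $X_1X_1+X_2X_2$. You need either a concrete substitute for Proposition \ref{pro:c1vfc} or to adopt a route of that kind; as written, the $C^1\to C^2$ passage is asserted rather than proved.
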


\noindent This is achieved in Theorem \ref{thm:end} of Section \ref{sec:235}. Our paper can be thought of as a detailed case study of a natural extension to methods making use of the Tanaka prolongation of a stratified Lie algebra. We hope it may provide insight into the way the structure of a rigid group controls the geometry and function theory of that group.

In \cite{OttazziWarhurstGeneral} it was shown that if $\fL$ is any rigid stratified Lie group, $\Upomega \subset \fL$ is open, and $f : \Upomega \to \fL$ is a $C^2$-smooth contact mapping then $f$ is $C^\infty$-smooth. We are improving on this result in a special case.

The result of \cite{OttazziWarhurstGeneral} just referred to is achieved as follows. If $V \in \fv^1 (\Upomega)$ is contact, then it may be regularized so that the smooth approximations are also contact. A $C^\infty$-smooth contact vector field on a domain of a rigid stratified Lie group is known to have polynomial coefficients, with the degree of these polynomials dependent only on the group structure. Indeed the $C^\infty$-smooth contact vector fields on a domain of a rigid group form a finite-dimensional Lie algebra. Consequently, the approximating vector fields limit on the original in a finite-dimensional subspace. Thus the original $C^1$-smooth contact vector field has polynomial coefficients and in fact $V \in \fv^\infty (\Upomega)$. This is utilized by taking the vector field generator $V$ of a family of $C^\infty$-smooth contact mappings, left-translations say, and considering $f_* V$ with $f$ a $C^2$-smooth contact mapping on $\Upomega$. Since
\begin{equation}\label{eq:c2contact}
    [f_* V , \fv_{\rH}^\infty (f(\Upomega))] \subset f_* [V, \fv_{\rH}^\infty (\Upomega)] \subset v_{\rH}^\infty (f(\Upomega))  
\end{equation}
we have by definition that $f_* V$ is a $C^1$-smooth contact vector field on $f(\Upomega)$. The previous argument implies $f_* V$ is $C^\infty$-smooth and this can be used to deduce that $f$ itself is smooth. When $f$ is assumed only $C^1$-smooth, \eqref{eq:c2contact} is not justified. The computations of Section \ref{sec:235} are a means of circumnavigating this obstacle.

Let us take a closer look at the guiding principle behind those computations. Suppose $\fL$ is a stratified Lie group with $\fl = \mathrm{Lie}(\fL)$. If $X \in \fl$ then $p \mapsto \exp (t X) p$ is left-translation by $\exp (t X)$, a family of $C^\infty$-smooth contact mappings indexed by parameter $t$. Let $V|_p = \frac{\rd}{\rd t} \big|_{t=0} \exp (t X) p$. With $f$ a contact mapping on a domain $\Upomega \subset \fL$ we consider
\[
    h_t (q) = f \left(\exp (tX) f^{-1} (q)\right)
\]
which is a family of contact mappings (on $f(\Upomega)$) satisfying $h_0 (q) = q$. Assuming everything is smooth, we notice that $\dot{h}_0 (q) = (f_* V)|_q = (\widetilde{X} f)|_{f^{-1}(q)}$ with $\widetilde{X}$ the right-invariant mirror of $X$. Of course, if $X$ is in the center of $\fl$ then $\widetilde{X} = X$. We are able to rely on such vector fields in Section \ref{sec:235}. 

Suppose $\fL$ is a rigid group and $\fv_{\mathrm{C}}(\Upomega)$ is the finite-dimensional Lie algebra of $C^\infty$-smooth contact vector fields on $\Upomega$ (isomorphic to a Lie algebra independent of $\Upomega$ we will denote $\fv_{\mathrm{C}}$). Once a contact mapping $f$ on $\Upomega$ has been shown to be $C^\infty$-smooth it is found to be induced by an element of the automorphism group of $\fv_{\mathrm{C}}$. See the paragraphs at the end of Section 3 in \cite{OttazziWarhurstGeneral}. In the case of the Cartan group, $\fv_{\mathrm{C}}$ is isomorphic to $\fg_2$ the real ``split form'' of the $14$-dimensional exceptional complex simple Lie algebra $\fg_2^\mC$. For a painstaking calculation of a basis of vector fields for $\fv_{\mathrm{C}}(\fC)$ see \cite{Sachkov}.

A $C^1$-smooth assumption could be said to be natural when defining a contact mapping, however, it leaves something to be desired. Better would be a horizontal Sobolev-type condition. That is, if $\bigoplus_{i\geq 1}\fl_i = \mathrm{Lie}(\fL)$ and $X_1,\ldots,X_d$ is a basis for $\fl_1$ then we would like to assume only that $f : \Upomega \to f(\Upomega)$ is a homeomorphism and that the components of the distributional derivatives $X_j f$ are in $L_{\mathrm{loc}}^r (\Upomega)$ for some $r \geq 1$. This setup can then be used to define contact in a weak sense. It could be argued that exactly what $r$ should be is yet to be decided. It seems quite likely that, with some algebraic dexterity, our approach could give a $C^1$-smooth result for all rigid groups. Whether a variation on these methods has application in the Sobolev setting remains to be seen, but there is reason to believe it would require some delicate analysis. This is in large part due to the appearance of right-invariant derivatives as mentioned above.   

A horizontal Sobolev-type condition is the right assumption in the study of (locally) quasiconformal mappings (for equivalence of the analytic definition with other definitions). We do not pursue this topic in any detail here for the sake of brevity. We should, however, make note of recent work on Xie's conjecture. This conjecture states that if $\fL$ is a stratified Lie group (equipped with its canonical Carnot-Carth\'{e}odory distance function) other than $\mR^n$ or $\mH^n$ (the $n$-th Heisenberg group) then every quasiconformal mapping $f : \Upomega \to \fL$ is locally bi-Lipschitz, and moreover if $\Upomega = \fL$ then $f$ is bi-Lipschitz. The recent paper \cite{KMX} proves this in the case of non-rigid Carnot groups. In the case of rigid groups, the conjecture would be verified if it were true that all quasiconformal mappings were $C^\infty$-smooth (once coupled with the results of \cite{CowOttGlobal}). All geometric mappings such as bi-Lipschitz and quasiconformal mappings must be weakly contact. On the other hand, a $C^1$-smooth contact mapping is necessarily locally quasiconformal. In \cite{KMX} the authors mention that rigid groups will be discussed in a forthcoming article.

If $\fL$ is an $H$-type group with center of dimension at least $3$ (these groups are rigid), then a quasiconformal mapping $f : \Upomega \to \fL$ is $C^\infty$-smooth by the regularity results of \cite{CapognaRegularity99} and \cite{Capogna&Cowling}. These rely on non-linear potential theory. We emphasize that our results are achieved using linear operators only. Our program of using vector flow methods in conjunction with linear hypoelliptic operators was begun in collaboration with Jeremy Tyson in \cite{AustinTyson}, and was to some extent influenced by \cite{LiuAnother} and \cite{Sarvas}. These papers are all related to conformal, or $1$-quasiconformal mappings and Liouville-type theorems for them.  

\subsection{Acknowledgements}
The author thanks Alessandro Ottazzi for comments on the Tanaka prolongation of a stratified Lie algebra, Francesco Serra Cassano for elaborating on the proof of Lemma \ref{lem:scc1}, and Ben Warhurst for suggesting the Cartan group as suitable test case.

\section{Contact vector fields on a stratified Lie group}\label{sec:basics}

A Lie algebra $\fl$ is called stratified (of step $s$) if $\dim(\fl)<\infty$ and there are vector spaces $\fl_i$ such that $\fl = \bigoplus_{i\geq 1}\fl_i$, $[\fl_{1} , \fl_i] = \fl_{i+1}$ for all $i \geq 1$, $\fl_{s} \neq \{0\}$, and $\fl_i = \{0\}$ for all $i > s$. A Lie group $\fL$ is called stratified if it is connected, simply connected, and $\fl = \mathrm{Lie}(\fL)$ is stratified. Nothing essential is lost if a stratified Lie group is regarded as $\mR^n$ with a polynomial group law. Indeed, if $\fL$ is a stratified Lie group and $\Upomega \subset \fL$ is open then we (tacitly) identify $\Upomega$ with an open subset of $\mR^n$ whenever it is convenient to do so.

If $\fL$ is a stratified Lie group and $X_1,\ldots, X_d$ is a basis for $\fl_{1}$ then $\mathrm{H}\fL$ denotes the distribution determined by $\mathrm{H}_p \fL = \mathrm{span} (X_1|_p,\ldots,X_d|_p)$. If $\Upomega \subset \fL$ is open then $\rH \Upomega$ is the induced distribution and we write $\fv_{\mathrm{H}}^k (\Upomega)$ for the $C^k$-smooth sections of $\mathrm{H}\Upomega$. Elements of $\fv_{\mathrm{H}}^k (\Upomega)$ will be referred to as horizontal vector fields. When $k\geq 1$, a vector field $V \in \fv^k (\Upomega)$ is called contact if $[V,\fv_{\mathrm{H}}^\infty (\Upomega)] \subset \fv_{\mathrm{H}}^\infty (\Upomega)$.

A simple prolongation of a stratified Lie algebra $\fl = \bigoplus_{i\geq 1}\fl_{i}$ is a graded Lie algebra $\fh = \bigoplus_{i \in \mZ} \fh_i$ satisfying
\begin{enumerate}[(i)]
	\item for all $i \leq -1$, $\fh_i = \fl_{-i}$ and
	\item for all $i \geq 0$, if $Z \in \fh_i$ is such that $[Z , \fh_{-1}] = \{0\}$ then $Z = 0$.
\end{enumerate}
The Tanaka prolongation of $\fl$ is the simple prolongation $\ft (\fl) = \bigoplus_{i\in \mZ} \ft_i (\fl)$ such that whenever $\fh = \bigoplus_{i\in \mZ}\fh_i$ is another simple prolongation, there is an injective Lie algebra homomorphism $\xi : \fh \to \ft (\fl)$ with $\xi (\fh_i) \subset \ft_i (\fl)$ for all $i$. See \cite[pp.~23-25]{Tanaka1} for the construction of $\ft (\fl)$. A stratified Lie group $\fL$ with Lie algebra $\fl$ is called rigid if there exists $N \geq 0$ such that $\ft_i (\fl) = \{0\}$ for all $i \geq N$. The following can be found near the end of Section 3 in \cite{OttazziWarhurstGeneral}.

\begin{pro}
	Let $\fL$ be a stratified Lie group of dimension $n$ and let $\Upomega \subset \fL$ be open. If $\fL$ is rigid then there is $N \geq 0$ with the following property: if $Y_1, \ldots, Y_n$ is a basis for $\fl = \mathrm{Lie}(\fL)$ and $V = \sum_{i=1}^n v_i Y_i \in \fv^1 (\Upomega)$ is a contact vector field, then the component functions $v_i$ are polynomials of degree no greater than $N$.
\end{pro}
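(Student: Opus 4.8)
The plan is to reduce to the case where $V$ is $C^\infty$-smooth, and then to extract the degree bound from the finiteness of the Tanaka prolongation. Throughout I fix a basis $Y_1,\dots,Y_n$ adapted to the stratification, so that $Y_1,\dots,Y_d$ span $\fl_1$ and each $Y_i$ lies in some layer $\fl_{\ell_i}$; since an arbitrary basis differs from an adapted one by an invertible constant linear change, under which the coefficient vector $(v_i)$ transforms linearly, the truth of the statement (with the same $N$) does not depend on this choice. Writing $V=\sum_i v_iY_i$ and using that a contact field satisfies $[V,Y_b]\in\fv_{\rH}^\infty(\Upomega)$ for every $b\le d$, a layer-by-layer comparison of components in $[V,Y_b]=\sum_i v_i[Y_i,Y_b]-\sum_i(Y_bv_i)Y_i$, together with $[\fl_1,\fl_\ell]=\fl_{\ell+1}$, shows that $Y_bv_k=\sum_{\ell_i=\ell_k-1}c^k_{ib}v_i$ whenever $b\le d$ and $\ell_k\ge 2$ (the $c^k_{ib}$ being structure constants of the $Y_i$), while moreover $Y_bv_c$ is $C^\infty$-smooth for all $b,c\le d$, as $\fv_{\rH}^\infty(\Upomega)$ consists of smooth sections. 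Since $Y_1,\dots,Y_d$ is bracket-generating, iterating these relations and the bracket identities shows that every iterated left-invariant derivative of every $v_i$ is smooth, so $V\in\fv^\infty(\Upomega)$. (One may instead regularize: averaging $V$ against a smooth bump through the left-translation action, which acts by contact diffeomorphisms, produces $C^\infty$-smooth contact fields $V^\varepsilon\to V$ locally uniformly on exhausting subdomains, and since the polynomial vector fields of degree $\le N$ form a finite-dimensional, hence closed, subspace, the smooth case below passes to the limit.) So assume from now on that $V\in\fv^\infty(\Upomega)$.

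Using a left-translation one may work near the identity $e$, in the polynomial coordinates of $\fL$; assign to the coordinate attached to $\fl_\ell$ the weight $\ell$, so that the left- and right-invariant frame fields from $\fl_\ell$ are weighted-homogeneous of weight $-\ell$. The system above then passes to the formal Taylor series $\widehat V=\sum_i\widehat v_i\,\partial_{x_i}$ of $V$ at $e$, and decomposing into weighted-homogeneous parts $\widehat V=\sum_{\mu\ge -s}\widehat V_\mu$ exhibits each $\widehat V_\mu$ as a weighted-homogeneous polynomial contact vector field of weight $\mu$. The crux is that $\fg:=\bigoplus_\mu\fg_\mu$, with $\fg_\mu$ the space of weighted-homogeneous polynomial contact fields of weight $\mu$, is a graded Lie algebra — the vector-field bracket adds weights and preserves contactness — and is a simple prolongation of $\fl$ in the sense recalled above. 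Indeed: a weighted-homogeneous contact field of weight $-\ell<0$ has bracket with each layer-one frame field of weight $-\ell-1$, which is too negative to be a nonzero horizontal polynomial field, so that bracket vanishes; hence the field is invariant under the flows of the layer-one fields, which are left translations and generate $\fL$, hence it is right-invariant, and so $\fg_{-\ell}$ is exactly the span of the right-invariant fields coming from $\fl_\ell$, giving $\fg_i=\fl_{-i}$ for $i\le-1$. Symmetrically, if $Z\in\fg_\mu$ with $\mu\ge 0$ and $[Z,\fg_{-1}]=\{0\}$, then $Z$ commutes with the right-invariant layer-one frame fields, whose flows are left translations generating $\fL$, so $Z$ is left-invariant; but a nonzero left-invariant field cannot be weighted-homogeneous of weight $\ge 0$, since the left-invariant frame fields carry weights $-1,\dots,-s$, so $Z=0$. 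By the universal property of $\ft(\fl)$ there is a grading-preserving injection $\fg\hookrightarrow\ft(\fl)$, and rigidity of $\fL$ furnishes an $N$ with $\ft_i(\fl)=\{0\}$, hence $\fg_i=\{0\}$, for $i\ge N$. Therefore $\widehat V_\mu=0$ for all $\mu\ge N$, so $\widehat V$ is a polynomial vector field whose components have weighted degree at most $N+s-1$.

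Applying the previous paragraph, after a left-translation, at every point of $\Upomega$ shows that the Taylor series of each $v_i$ at every point is a polynomial of degree bounded by a fixed constant depending only on $\fl$ — the change of base point being a polynomial coordinate change of degree at most $s$, which costs only a fixed factor; since a $C^\infty$-smooth function on a connected open set whose partial derivatives above a fixed order all vanish at every point is a polynomial of that degree, applying this on each connected component of $\Upomega$ and reverting to the original basis gives the uniform $N$ asserted in the statement. The step I expect to be the genuine obstacle is the verification that the weighted-homogeneous polynomial contact vector fields form a simple prolongation of $\fl$ — in particular the identification of $\fg_{<0}$ with $\fl$ and the non-degeneracy condition (ii) — since this is precisely where the algebraic structure (bracket-generation, the left/right-invariant dichotomy) is converted into the finiteness that rigidity then exploits; by comparison, the reduction to the smooth case and the ``polynomial jet at every point'' endgame are routine, although the regularization alternative does require some care to confirm that averaging over left-translations genuinely preserves contactness.
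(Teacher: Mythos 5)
The paper does not actually prove this proposition: it defers the smooth case to the constructions of Ottazzi--Warhurst (Section 2.6 of that paper) and handles the drop from $C^\infty$ to $C^1$ by the smoothing argument of \cite[p.~83]{CDMKR} --- essentially the regularization you relegate to a parenthesis, and which the paper then carries out explicitly for generalized vector fields in Proposition \ref{pro:contactvfpoly}. Your second and third paragraphs are a correct, self-contained rendering of the cited smooth-case argument: realizing the graded algebra of weighted-homogeneous polynomial contact fields as a simple prolongation of $\fl$ (with $\fg_{-\ell}$ the right-invariant fields from $\fl_\ell$), injecting it into $\ft(\fl)$, and using rigidity to kill the Taylor series above a fixed weight. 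One slip there: the flows of the \emph{left-invariant} layer-one fields are \emph{right} translations (your conclusion that a field commuting with them is right-invariant is nevertheless correct); you state it correctly the second time for condition (ii).

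The one step that does not hold as written is the primary reduction to the smooth case. From $[V,Y_b]\in\fv_{\rH}^\infty$ you correctly get that $Y_bv_c$ is $C^\infty$ for $b,c\le d$ and that $Y_bv_k$ is a constant-coefficient combination of lower-layer components for $\ell_k\ge 2$; but ``iterating these relations and the bracket identities'' to produce \emph{all} iterated left-invariant derivatives is not justified for a merely $C^1$ function: writing $Yv=X_1X_2v-X_2X_1v$ presupposes classical second derivatives and equality of the mixed ones, and even continuity of all iterated \emph{horizontal} derivatives does not by itself give Euclidean smoothness. The repair is exactly the device the paper uses in Theorem \ref{thm:end}: since $v_c\in C^1$, its distributional horizontal derivatives coincide with the classical ones, so $(X_1X_1+X_2X_2+\cdots)v_c$ is a smooth distribution and H\"ormander hypoellipticity gives $v_c\in C^\infty$; then induct up the layers using $Y_bv_k=\sum c^k_{ib}v_i$. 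With that insertion your bootstrap is actually a clean alternative to the convolution argument the paper cites --- it avoids having to verify that mollification preserves contactness --- so I would keep it, but name the hypoelliptic operator doing the work.
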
 

Should $\fv^1 (\Upomega)$ be replaced by $\fv^\infty (\Upomega)$ (see the second paragraph of the introduction for notation) then the statement follows almost immediately from the constructions of Section 2.6 in \cite{OttazziWarhurstGeneral}. Those constructions took inspiration from Section 2 of \cite{Yamaguchi}, which is largely an exposition of Section 6 in \cite{Tanaka1}. As stated, the proof of the proposition uses a smoothing argument which had appeared before in \cite[p. 83]{CDMKR}. 

When $\fL$ is a stratified Lie group and $\Upomega \subset \fL$ is open, we write $\cD'(\Upomega)$ for the real-valued distributions on $\Upomega$. Since there is an overlap of vocabulary we emphasize that \textit{distributions} is being used here in the sense of generalized functions. They are continuous linear functionals on $(C_0^{\infty}(\Upomega), \tau)$ with $\tau$ the usual topology. Convergence in $\cD' (\Upomega)$ is with reference to the weak-$\star$ topology. With $\fl = \mathrm{Lie}(\fL)$, define $\fv^{< 1} (\Upomega) = \cD' (\Upomega) \otimes \fl$. We think of $\fv^{<1} (\Upomega)$ as the space of finite formal sums
\[
    \left\{ \sum \alpha_i Z_i \;\bigg|\; \alpha_i \in \cD' (\Upomega) , \, Z_i \in \fl \right\}
\]
(which it is, modulo the null sums). We sometimes call an element of $\fv^{<1} (\Upomega)$ a \textbf{generalized vector field} (on $\Upomega$).

Suppose $\fL$ is a stratified Lie group with $\fl = \mathrm{Lie}(\fL)$ of step $s$. Let $\{Y_{i,j}\}$ be a basis for $\fl$ such that $Y_{i,1},\ldots,Y_{i,d_i}$ is a basis for $\fl_i$. For all $i =2\ldots,s$, $j = 1,\ldots, d_i$, and $k = 1,\ldots,d_1$ there is a linear combination $Q_{i,j,k}$ acting on $d_{i-1}$ objects such that for all open $\Upomega\subset \fL$ and for all $\alpha_1,\ldots, \alpha_{d_{i-1}} \in \cD' (\Upomega)$ we have 
\[
	\sum_{j=1}^{d_{i-1}} \alpha_j [Y_{i-1,j}, X_k] = \sum_{j=1}^{d_i} Q_{i,j,k} (\alpha_1,\ldots, \alpha_{d_{i-1}}) Y_{i,j} 
\]
in $\fv^{<1}(\Upomega)$ for all $i=2,\ldots,s$ and $k=1,\ldots,d_1$. A generalized vector field $\Upsilon = \sum_{i=1}^s \sum_{j=1}^{d_i} \alpha_{i,j} Y_{i,j}$ is called \textbf{contact} if for all $i = 2, \ldots, s$, $j = 1, \ldots, d_i$, and $k = 1, \ldots, d_1$ we have
\[
	X_k \alpha_{i,j} = Q_{i,j,k}(\alpha_{i-1,1},\ldots,\alpha_{i-1,d_{i-1}}).
\]

If $\fL$ is a stratified Lie group, $\Upomega \subset \fL$ is open, and $\Upomega_0 \subset 
\Upomega$ is also open then $\alpha \in \cD' (\Upomega)$ determines $\alpha_0 \in \cD' (\Upomega_0)$ via $\left<\alpha_0 , \phi\right> = \left<\alpha , \phi\right>$ for all $\phi \in C_0^\infty (\Upomega_0)\subset C_0^\infty (\Upomega)$. If $\overline{\Upomega}_0 \subset \Upomega$ and $\overline{\Upomega}_0$ is compact (from now on denoted $\Upomega_0 \subset\subset \Upomega$) then there exists $\epsilon_0 >0$ with the following property: whenever $\alpha \in \cD'(\Upomega)$ there is a family $\{ \alpha^{\epsilon} \,|\, 0<\epsilon <\epsilon_0\}$ of $C^\infty$-smooth functions defined on $\Upomega_0$ such that $\alpha^{\epsilon} \to \alpha_0$ in $\cD' (\Upomega_0)$ as $\epsilon \to 0$. These smooth approximations to $\alpha_0$ are achieved by convolving $\alpha$ with suitable elements of $C_0^\infty (\fL)$. For an exposition of this theory see \cite[pp.~88-90]{HormanderI}. The manner of the regularization implies that $Z\alpha^\epsilon = (Z\alpha)^\epsilon$ for all $Z \in \fl$. Moreover, if $\alpha, \beta \in \cD'(\Upomega)$ then $(s\alpha + t\beta)^{\epsilon} = s\alpha^{\epsilon} + t\beta^{\epsilon}$ in $\cD'(\Upomega_0)$ for all $s,t \in \mR$. 

Suppose $\alpha^\epsilon \to \alpha_0$ in $\cD' (\Upomega_0)$ as in the previous paragraph and that each $\alpha^\epsilon$ is a polynomial of degree at most $N$ (with $N$ independent of $\epsilon$). The distributions that may be identified with polynomials of degree at most $N$ form a finite-dimensional subspace of $\cD' (\Upomega_0)$. Since $\cD' (\Upomega_0)$ is a (locally convex and) Hausdorff topological vector space, this subspace is closed. It follows that $\alpha_0$ may also be identified with a polynomial of degree at most $N$.

Every open set $\Upomega \subset \fL$ admits a compact exhaustion. That is, there is a sequence of open sets $(\Upomega_k)_{k=1}^\infty$ such that $\Upomega_k \subset\subset \Upomega$, $\Upomega_k \subset \Upomega_{k+1}$, and $\Upomega = \bigcup \Upomega_k$. If $\alpha_k$ is the restriction of $\alpha$ to $C_0^\infty (\Upomega_k)$ (as discussed in the case of $k=0$), and each $\alpha_k$ is found to be a polynomial, then the polynomials are the same and $\alpha$ may be identified with a polynomial on $\Upomega$. If $\alpha$ is a continuous function to begin with, and $\alpha$ as distribution may be identified with a polynomial, then $\alpha$ is that polynomial.  

\begin{pro} \label{pro:contactvfpoly}
Let $\fL$ be a stratified Lie group with Lie algebra $\fl = \bigoplus_{i\geq 1} \fl_i$. Let $d_i = \dim(\fl_i)$ and let $Y_{i,j}$ with $j=1,\ldots, d_i$ be a basis for $\fl_i$. Let $\Upomega \subset \fL$ be open and suppose $\Upsilon = \sum_i\sum_j \alpha_{i,j} Y_{i,j}$ is a contact generalized vector field on $\Upomega$. If $\fL$ is rigid then each $\alpha_{i,j}$ may be identified with a polynomial on $\Upomega$.
\end{pro}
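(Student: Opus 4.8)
The plan is to upgrade the preceding Proposition, which handles $C^1$-smooth contact vector fields, to distributional contact fields by regularizing, and then to pass to the limit using the closedness arguments assembled above. Since a distribution on $\Upomega$ that restricts to a polynomial of degree at most $N$ on each member of a compact exhaustion $(\Upomega_k)$ of $\Upomega$ is a polynomial on $\Upomega$ (the degrees agreeing on overlaps, as noted above), it suffices to fix $\Upomega_0 \subset\subset \Upomega$ and to produce $N$, depending only on the rigid group $\fL$, such that each $\alpha_{i,j}$ restricts on $\Upomega_0$ to a polynomial of degree at most $N$.

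To that end, for $0 < \epsilon < \epsilon_0$ let $\alpha_{i,j}^\epsilon \in C^\infty(\Upomega_0)$ be the smoothings of $\alpha_{i,j}$, so that $\alpha_{i,j}^\epsilon \to (\alpha_{i,j})_0$ in $\cD'(\Upomega_0)$, and set $\Upsilon^\epsilon = \sum_i \sum_j \alpha_{i,j}^\epsilon Y_{i,j}$. Because smoothing commutes with left-invariant derivatives and is linear, while each $Q_{i,j,k}$ is a fixed linear combination, the contact identities pass to the smoothings:
\[
	X_k \alpha_{i,j}^\epsilon = (X_k \alpha_{i,j})^\epsilon = Q_{i,j,k}(\alpha_{i-1,1},\ldots,\alpha_{i-1,d_{i-1}})^\epsilon = Q_{i,j,k}(\alpha_{i-1,1}^\epsilon,\ldots,\alpha_{i-1,d_{i-1}}^\epsilon),
\]
so $\Upsilon^\epsilon$ is a contact generalized vector field on $\Upomega_0$ with $C^\infty$-smooth coefficients. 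The key remaining point is that such an object is nothing but a $C^\infty$-smooth contact vector field in the ordinary sense. Indeed, expanding $[\Upsilon^\epsilon, \sum_k w_k X_k]$ for an arbitrary smooth horizontal field via $[fY, gX] = fg[Y,X] + f(Yg)X - g(Xf)Y$ and reading off, for each $m \geq 2$, the component along $\fl_m$ in the left-invariant frame, one finds that this component equals $\sum_{j,k} w_k\bigl(Q_{m,j,k}(\alpha_{m-1,1}^\epsilon,\ldots,\alpha_{m-1,d_{m-1}}^\epsilon) - X_k\alpha_{m,j}^\epsilon\bigr)Y_{m,j}$, which vanishes by the identity just displayed; hence $[\Upsilon^\epsilon, \fv_{\rH}^\infty(\Upomega_0)] \subset \fv_{\rH}^\infty(\Upomega_0)$. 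Conversely, testing against $w = X_k$ recovers the generalized contact condition, so the two notions genuinely coincide for smooth coefficients. Thus $\Upsilon^\epsilon \in \fv^1(\Upomega_0)$ is contact, and the Proposition furnishes an $N \geq 0$, independent of $\epsilon$ and of $\Upomega_0$, with each $\alpha_{i,j}^\epsilon$ a polynomial of degree at most $N$.

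Finally we let $\epsilon \to 0$. The polynomials of degree at most $N$ form a finite-dimensional, hence closed, subspace of the Hausdorff space $\cD'(\Upomega_0)$, so the limit $(\alpha_{i,j})_0$ lies in it; since $(\alpha_{i,j})_0$ is the restriction of $\alpha_{i,j}$, and this holds on every $\Upomega_0 \subset\subset \Upomega$, the distribution $\alpha_{i,j}$ is identified with a polynomial on $\Upomega$. I expect the only step carrying genuine content to be the identification of a smooth contact generalized vector field with an honest contact vector field so that the Proposition applies — the remainder being an assembly of the regularization and closedness arguments already in place — and the mild care needed there is the bracket bookkeeping that isolates the $\fl_m$-components.
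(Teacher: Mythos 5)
Your proposal is correct and follows essentially the same route as the paper: regularize the coefficients on a compactly contained subdomain, observe that the contact identities pass to the smoothings, verify by the bracket expansion that the smoothed field is contact in the classical sense so that the polynomial bound applies, and then pass to the limit in the finite-dimensional (hence closed) subspace of $\cD'(\Upomega_0)$ before exhausting $\Upomega$. The bracket bookkeeping you flag as the step of genuine content is exactly the computation the paper carries out in full.
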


\begin{proof}
Let $\Upomega_0$ be open such that $\Upomega_0 \subset\subset \Upomega$. Let $\alpha_{i,j}^{\epsilon}$ be a sequence of $C^\infty$-smooth functions defined on $\Upomega_0$ as discussed in the paragraphs preceding this proof. It follows $\Upsilon^{\epsilon} = \sum_i \sum_j \alpha_{i,j}^{\epsilon} Y_{i,j}$ is a $C^\infty$-smooth vector field on $\Upomega_0$. Let $X_j = Y_{1,j}$ and suppose $V \in \fv_{\mathrm{H}}^\infty (\Upomega_0)$. It follows $V =\sum_j \sigma_j X_j$ with each $\sigma_j \in C^\infty (\Upomega_0)$. Simple rearrangements and a replacement justified by the definition of $Q_{i,j,k}$ alone yield
\begin{align*}
	[\Upsilon^{\epsilon}, V] &= \sum_{k=1}^{d_1}\sum_{i=1}^{s}\sum_{j=1}^{d_i} [\alpha_{i,j}^{\epsilon} Y_{i,j}, \sigma_k X_k] \\
&= \sum_{k=1}^{d_1}\sum_{i=1}^{s}\sum_{j=1}^{d_i} \left(\alpha_{i,j}^{\epsilon} \sigma_k [Y_{i,j}, X_k] + \alpha_{i,j}^{\epsilon} (Y_{i,j} \sigma_k) X_k  - \sigma_k (X_k \alpha_{i,j}^{\epsilon})Y_{i,j}\right)\\
&= \sum_{k=1}^{d_1}\sum_{i=1}^{s}\sum_{j=1}^{d_i} \alpha_{i,j}^{\epsilon} (Y_{i,j} \sigma_k) X_k - \sum_{k=1}^{d_1}\sum_{j=1}^{d_1} \sigma_k (X_k \alpha_{1,j}^{\epsilon})X_j \\
& \quad + \sum_{k=1}^{d_1} \sigma_k \left( \sum_{i=2}^{s+1} \sum_{j=1}^{d_{i-1}} \alpha_{i-1,j}^{\epsilon} [ Y_{i-1,j}, X_k] - \sum_{i=2}^{s} \sum_{j=1}^{d_i} (X_k \alpha_{i,j}^{\epsilon})Y_{i,j} \right) \\
&=\sum_{k=1}^{d_1}\sum_{i=1}^{s}\sum_{j=1}^{d_i} \alpha_{i,j}^{\epsilon}(Y_{i,j} \sigma_k) X_k - \sum_{k=1}^{d_1}\sum_{j=1}^{d_1} \sigma_k (X_k \alpha_{1,j}^{\epsilon})X_j \\
& \quad + \sum_{k=1}^{d_1} \sigma_k \sum_{i=2}^{s} \sum_{j=1}^{d_{i}} \left(Q_{i,j,k}(\alpha_{i-1,1}^{\epsilon},\ldots,\alpha_{i-1,d_{i-1}}^{\epsilon}) - X_k \alpha_{i,j}^{\epsilon}\right)Y_{i,j}.
\end{align*} 
Since
\[
	X_k \alpha_{i,j}^{\epsilon} = (X_k \alpha_{i,j})^{\epsilon} = Q_{i,j,k}(\alpha_{i-1,1},\ldots,\alpha_{i-1,d_{i-1}})^{\epsilon} = Q_{i,j,k}(\alpha_{i-1,1}^{\epsilon},\ldots,\alpha_{i-1,d_{i-1}}^{\epsilon})
\]
we have that
\[
	[\Upsilon^{\epsilon}, V] = \sum_{k=1}^{d_1}\sum_{i=1}^{s}\sum_{j=1}^{d_i} \alpha_{i,j}^{\epsilon} (Y_{i,j} \sigma_k) X_k - \sum_{k=1}^{d_1}\sum_{j=1}^{d_1} \sigma_k (X_k \alpha_{1,j}^{\epsilon})X_j.
\]
This is clearly a horizontal vector field on $\Upomega_0$, hence $\Upsilon^{\epsilon}$ is contact in the classical sense. Consequently, there is $N$ such that $\alpha_{i,j}^{\epsilon}$ is a polynomial of degree at most $N$ for all $i$, $j$, and suitable $\epsilon$. Our comments above imply that each $\alpha_{i,j}$ may be identified with a polynomial on $\Upomega$.
\end{proof}

\section{Contact mappings on the Cartan group}\label{sec:235}
The set $\mR^5$ with group product 
\[
    (x_1,x_2,y,z_1,z_2)(x_1',x_2', y',z_1',z_2') = (P_1,P_2,P_3,P_4,P_5),
\]
\begin{align*}
    P_1 &= x_1+x_1', \\
    P_2 &= x_2+x_2', \\
    P_3 &= y+y' + \tfrac{1}{2}(x_1 x_2' - x_2 x_1'), \\
    P_4 &= z_1 + z_1' + \tfrac{1}{2}(x_1 y' - y x_1') +\tfrac{1}{12}\left( (x_1 - x_1' ) (x_1 x_2' - x_2 x_1') \right), \text{ and} \\
    P_5 &= z_2 + z_2' + \tfrac{1}{2}(x_2 y' - y x_2') +\tfrac{1}{12}\left( (x_2 - x_2' ) (x_1 x_2' - x_2 x_1') \right),
\end{align*}
is a connected, simply connected Lie group we denote $\fC$. It is a realization of what is sometimes called the Cartan group. We choose the following basis for $\fc = \mathrm{Lie}(\fC)$: 
\begin{align*}
    X_1 &= \partial_{x_1} - \tfrac{1}{2}x_2 \partial_y -\tfrac{1}{2}(y+\tfrac{1}{6}x_1 x_2)\partial_{z_1} - \tfrac{1}{12}x_2^2\partial_{z_2}, \\
    X_2 &= \partial_{x_2} + \tfrac{1}{2}x_1 \partial_y + \tfrac{1}{12}x_1^2\partial_{z_1} - \tfrac{1}{2}(y-\tfrac{1}{6}x_1 x_2)\partial_{z_2}, \\
    Y &= \partial_y + \tfrac{1}{2}x_1 \partial_{z_1} + \tfrac{1}{2} x_2 \partial_{z_2}, \\
    Z _1 &= \partial_{z_1}, \text{ and} \\
    Z _1 &= \partial_{z_2}.
\end{align*}
On occasion it will be convenient to have the alternative labels $Y_1 = X_1$, $Y_2 = X_2$, $Y_3 = Y$, $Y_4 = Z_1$, and $Y_5 = Z_2$. The explicit expressions for these vector fields are achieved by $Y_j|_p = D_0 L_p (\partial_j)$, with $\partial_1, \ldots, \partial_5$ the canonical Euclidean basis at the origin of $\mR^5$.

It is easily found that
\[
    [X_1, X_2] = Y, \quad [X_1, Y] = Z_1,\quad \text{and} \quad [X_2, Y] = Z_2.
\]
All brackets not immediate consequences of these are trivial. Consequently, $\fc$ is a stratified Lie algebra of step $3$ with 
\[
    \fc_{1} = \text{span}( X_1, X_2), \quad \fc_{2} = \text{span}( Y ), \quad\text{and}\quad \fc_{3} = \text{span}(Z_1, Z_2)
\]
and $\fc_1$ determines a $(2,3,5)$-distribution. 

\begin{rem}
The group product is that arising from the following procedure (in brief). Begin with an abstract Lie algebra $\tilde\fc$ with basis $\widetilde{X}_1,\ldots,\widetilde{Z}_2$ satisfying the relations just discussed ($\widetilde{X}_1$ replaces $X_1$ etc.). There is a connected, simply connected abstract Lie group $\widetilde\fC$ with $\mathrm{Lie}(\widetilde\fC)=\tilde\fc$ and such that $\exp : \tilde\fc \to \widetilde\fC$ is a $C^\infty$-smooth diffeomorphism. Now identify $(x_1,x_2,y,z_1,z_2)\in \mR^5$ with $\exp (x_1 \widetilde{X}_1 + x_2 \widetilde{X}_2 + y \widetilde{Y} + z_1 \widetilde{Z}_1 + z_2 \widetilde{Z}_2)$ and use the Baker-Campbell-Hausdorff formula to discover a group law making $\mR^5$ equipped with that law isomorphic to $\widetilde\fC$.
\end{rem}

It is well known that $\fC$ is rigid, indeed $\ft(\fc)$ is isomorphic to the exceptional simple Lie algebra of dimension $14$. More details can be found in \cite[pp.~29-30]{Tanaka1}. 

A basis dual to $X_1, X_2, Y, Z_1, Z_2$ for the $1$-forms on $\fC$ is given by
\begin{align*}
	\eta_1 &= \mathrm{d}x_1, \\
	\eta_2 &= \mathrm{d}x_2, \\
	\theta &= \mathrm{d}y + \tfrac{1}{2}x_2 \mathrm{d}x_1 - \tfrac{1}{2} x_1 \mathrm{d}x_2, \\
	\iota_1 &= \mathrm{d}z_1 + \left(\tfrac{1}{2} y - \tfrac{1}{6}x_1 x_2 \right) \mathrm{d}x_1 + \tfrac{1}{6} x_1^2 \mathrm{d}x_2 + \tfrac{1}{2} x_1 \mathrm{d}y, \text{ and}\\
\iota_2 &= \mathrm{d}z_2 -\tfrac{1}{6} x_2^2 \mathrm{d}x_1 +  \left(\tfrac{1}{2} y + \tfrac{1}{6}x_1 x_2 \right)\mathrm{d}x_2 - \tfrac{1}{2} x_2 \mathrm{d}y.
\end{align*}
We will sometimes refer to these by $\theta_1 = \eta_1$, $\theta_2 = \eta_2$, $\theta_3 = \theta$, $\theta_4 = \iota_1$, and $\theta_5 = \iota_2$.

Like any stratified Lie group, $\fC$ admits a family $\{\delta_r \,|\, r \in (0,\infty)\}$ of group automorphisms called homogeneous dilations,
\begin{equation*}
    \delta_r (x_1,x_2,y,z_1,z_2) = (r x_1, r x_2, r^2 y, r^3 z_1, r^3 z_2).
\end{equation*}

If $\Upomega \subset \fC$ is open and $f : \Upomega \to \fC$ is a $C^1$-smooth contact mapping, we define
\[
	J_H f = \det \begin{pmatrix} X_1 f_1 & X_2 f_1 \\ X_1 f_2 & X_2 f_2 \end{pmatrix}.
\]
It is sometimes called the horizontal Jacobian of $f$.

We rely on the following result of Warhurst from \cite{WarhurstC1}.
\begin{thm} \label{thm:c1pansu}
Let $\fL$ be a stratified Lie group, let $\Upomega \subset \fL$ be open, and let $f : \Upomega \to \fL$ be a $C^1$-diffeomorphism. Then $f$ is a contact mapping if and only if $f$ is Pansu-differentiable.
\end{thm}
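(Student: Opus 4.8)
The stated equivalence splits into two implications, and I would treat them separately. Throughout, write $\fl=\bigoplus_{i\ge1}\fl_i$ with a graded basis $\{Y_{i,j}\}$ and $X_k=Y_{1,k}$, recall $\Gamma^k_p=\mathrm{span}\{Y_{i,j}|_p:i\le k\}$, and note that the left-invariant frame identifies the associated graded $\bigoplus_k \Gamma^k_p/\Gamma^{k-1}_p$, with the bracket inherited from Lie brackets of vector fields, with $\fl$ itself. For $q\in\fL$ let $L_q$ denote left translation, and recall that $f$ is Pansu-differentiable at $p$ if the maps $q\mapsto\delta_{1/r}\big(f(p)^{-1}f(p\,\delta_r(q))\big)$ converge, uniformly for $q$ in compact subsets of $\fL$ as $r\to0^{+}$, to a homogeneous homomorphism $Df(p):\fL\to\fL$.

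For the implication ``Pansu-differentiable $\Rightarrow$ contact'' I would test the blow-up against horizontal directions only. Fix $p$ and $v\in\fl_1$, so that $\delta_r(\exp v)=\exp(rv)$. Since $f$ is $C^1$, a first-order Euclidean expansion gives $f(p)^{-1}f(p\exp(rv))=\exp\!\big(rw+o(r)\big)$ with $w:=dL_{f(p)^{-1}}\,df_p(v|_p)\in\fl$. Writing $w=\sum_i w_i$, $w_i\in\fl_i$, and applying $\delta_{1/r}$, the $\fl_i$-component of the blow-up equals $r^{\,1-i}w_i$ plus lower-order terms, and for $i\ge2$ this has a finite limit as $r\to0$ only if $w_i=0$. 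Hence $df_p(v|_p)\in\rH_{f(p)}$ for all $v\in\fl_1$, i.e.\ $df_p(\rH_p)\subseteq\rH_{f(p)}$; since $f$ is a diffeomorphism and $\dim\rH_p=\dim\rH_{f(p)}$, the inclusion is an equality and $f$ is contact.

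For the converse, assume $f$ is a $C^1$ contact diffeomorphism. Then $f_*$ sends $\fv_{\rH}^\infty(\Upomega)$ into $\fv_{\rH}^\infty(f(\Upomega))$, and iterating brackets — together with the same property for $f^{-1}$ — shows $df_p(\Gamma^k_p)=\Gamma^k_{f(p)}$ for every $k$ and every $p$. Thus $df_p$ induces a graded linear isomorphism $\mathrm{gr}(df_p):\fl\to\fl$. Using $f_*[V,W]=[f_*V,f_*W]$ for horizontal $V,W$, and the analogous identity level by level, one checks that $\mathrm{gr}(df_p)$ respects every bracket $\fl_i\times\fl_j\to\fl_{i+j}$, so it is a graded automorphism of the Lie algebra $\fl$ and integrates to a homogeneous automorphism $\Phi_p$ of $\fL$ whose restriction to $\fl_1$ is exactly the horizontal part of $df_p$ read in the left-invariant frame. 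The candidate is $Df(p)=\Phi_p$, and what remains is to prove $\delta_{1/r}\big(f(p)^{-1}f(p\,\delta_r(q))\big)\to\Phi_p(q)$ uniformly for $q$ in compact sets; this is, in effect, a Stein-type statement, that continuity of the horizontal derivative of a contact map forces Pansu-differentiability.

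To establish this I would write a group element $\delta_r(q)$ as an ordered product $\exp(t_1 rX_{k_1})\cdots\exp(t_m rX_{k_m})$ of short flows along the horizontal generators (possible since $\fl_1$ bracket-generates, with $t_j,k_j$ depending only on $q$), push this product through $f$, and compute the increment along each leg by the fundamental theorem of calculus. Along a leg $f$ moves, to leading order in $r$, by $df$ applied to a horizontal vector — again horizontal by the contact hypothesis — while the deviation of $df$ from $df_p$ along the path is $o(r)$ in horizontal directions by uniform continuity of $df$ on a fixed compact neighbourhood of $p$. Reassembling the legs with the polynomial group law and applying $\delta_{1/r}$ turns the horizontal $o(r)$ errors into $o(1)$, leaves the principal term equal to $\Phi_p(q)$, and keeps the convergence uniform on compacts because all estimates were. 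I expect this last step to be the main obstacle: the anisotropic dilation amplifies the non-horizontal part of every error by a factor $r^{-i}$, so one must invoke the contact condition \emph{at every point along the connecting path}, not just at $p$, and combine it with uniform continuity of $df$ to be sure the dangerous components are genuinely of higher order. Controlling the cross terms produced by the Baker--Campbell--Hausdorff mixing of layers is the delicate bookkeeping at the heart of the proof.
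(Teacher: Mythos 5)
First, note that the paper offers no proof of this statement: Theorem \ref{thm:c1pansu} is imported wholesale as ``a result of Warhurst from \cite{WarhurstC1}'', so there is no in-house argument to compare yours against; what follows is an assessment of your attempt on its own terms. Your forward implication (Pansu-differentiable $\Rightarrow$ contact) is essentially correct: the first-order Euclidean expansion gives the $\fl_i$-component of the blow-up as $r^{1-i}\left(w_i+o(1)\right)$, and existence of the limit forces $w_i=0$ for $i\geq 2$, hence $df_p(\rH_p)\subseteq \rH_{f(p)}$ and equality by dimension count.

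The converse, however, has a genuine gap in exactly the place the introduction of this paper warns about. To build the candidate automorphism $\Phi_p$ you invoke $f_*[V,W]=[f_*V,f_*W]$ for horizontal $V,W$ ``and the analogous identity level by level'' to conclude $df_p(\Gamma^k_p)=\Gamma^k_{f(p)}$ and bracket-compatibility of $\mathrm{gr}(df_p)$. But for a merely $C^1$ map, $f_*V$ is only continuous, so $[f_*V,f_*W]$ has no classical meaning; making this step rigorous requires second derivatives of $f$, which is precisely the failure of \eqref{eq:c2contact} that motivates the whole paper. (Filtration preservation can in fact be recovered from first-order data by evaluating $d(f^*\theta_i)=f^*(d\theta_i)$ on horizontal bivectors, but that is not the route you describe.) More seriously, the core of the theorem --- that the rescaled maps $\delta_{1/r}\left(f(p)^{-1}f(p\,\delta_r(q))\right)$ actually converge to $\Phi_p(q)$ uniformly on compacta --- is only sketched: you correctly identify that the anisotropic dilation amplifies non-horizontal errors by $r^{-i}$ and that the contact condition must be used along the entire connecting horizontal path, but you then set aside the resulting bookkeeping as ``the delicate part at the heart of the proof'' without carrying it out. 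That estimate \emph{is} the theorem; without it the converse is unproved. One must run the horizontal-path argument quantitatively, using the vanishing of $\left< f^*\theta_i , X_k\right>$ along the path together with uniform continuity of the horizontal differential to show that each layer-$i$ component of the increment is $O(r^i)$ with the correct leading coefficient --- which is the content of Warhurst's proof, not of your outline.
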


Let $\Upomega \subset \fC$ be an open subset and let $f : \Upomega \to \fC$ be a $C^1$-smooth contact mapping. Let $\Upomega' = f(\Upomega)$ and define $g : \Upomega' \to \Upomega$ by $g = f^{-1}$. These are fixed for the remainder of the section.  

By Theorem \ref{thm:c1pansu} $f$ is Pansu-differentiable. This means for each $p \in \Upomega$ there is a graded Lie algebra automorphism $\cP_p f$ whose action on $\mathrm{T}_0\fC$ is given by
\[
	\cP_p f ( X|_0 ) = \lim_{r\to 0} \exp^{-1}\left(\delta_{\frac{1}{r}} \left( f(p)^{-1} f(p\delta_r ( \exp X|_0 ))\right)\right).
\]

Using only the structural properties of $\fc$ we find
\begin{align*}
    \cP f (Y|_0) &= [\cP f (X_1|_0) , \cP f (X_2|_0)] = (J_H f) Y|_0, \\
    \cP f (Z_1|_0) &= J_H f \left(\left(X_1 f_1\right) Z_1|_0 + \left(X_1 f_2\right) Z_2|_0\right), \text{ and} \\
    \cP f (Z_2|_0) &= J_H f \left(\left(X_2 f_1\right) Z_1|_0 + \left(X_2 f_2\right) Z_2|_0\right).
\end{align*}
Combining these with evaluation of the limit as $r \to 0$ of the component functions of $\exp^{-1}\left(\delta_{1/r} \left( f(p)^{-1} f(p\delta_r (\exp X|_0)) \right)\right)$ for different choices of $X|_0 \in \mathrm{T}_0 \fC$ we discover the following identities:
\begin{align*}
    (\cP f)_{3,1} &= X_1 f_3 + \tfrac{1}{2} (f_2 X_1 f_1  - f_1  X_1 f_2 ) = 0, \\
    (\cP f)_{4,1} &= X_1 f_4 + \tfrac{1}{2} ( f_3 X_1 f_1 - f_1 X_1 f_3  ) - \tfrac{1}{6}(f_1 (f_2 X_1 f_1 - f_1 X_1 f_2)) = 0, \\
    (\cP f)_{5,1} &= X_1 f_5 + \tfrac{1}{2} ( f_3 X_1 f_2 - f_2 X_1 f_3  ) - \tfrac{1}{6}(f_2 (f_2 X_1 f_1 - f_1 X_1 f_2)) = 0, \\
    (\cP f)_{3,2} &= X_2 f_3 + \tfrac{1}{2} (f_2 X_2 f_1  - f_1  X_2 f_2 ) = 0, \\
    (\cP f)_{4,2} &= X_2 f_4 + \tfrac{1}{2} ( f_3 X_2 f_1 - f_1 X_2 f_3  ) - \tfrac{1}{6}(f_1 (f_2 X_2 f_1 - f_1 X_2 f_2)) = 0, \\
    (\cP f)_{5,2} &= X_2 f_5 + \tfrac{1}{2} ( f_3 X_2 f_2 - f_2 X_2 f_3  ) - \tfrac{1}{6}(f_2 (f_2 X_2 f_1 - f_1 X_2 f_2)) = 0, \\
    (\cP f)_{3,3} &= Y f_3 + \tfrac{1}{2} (f_2 Y f_1  - f_1  Y f_2 ) = J_H f, \\
    (\cP f)_{4,3} &= Y f_4 + \tfrac{1}{2} ( f_3 Y f_1 - f_1 Y f_3  ) - \tfrac{1}{6}(f_1 (f_2 Y f_1 - f_1 Y f_2)) = 0, \\
    (\cP f)_{5,3} &= Y f_5 + \tfrac{1}{2} ( f_3 Y f_2 - f_2 Y f_3  ) - \tfrac{1}{6}(f_2 (f_2 Y f_1 - f_1 Y f_2)) = 0, \\ 
    (\cP f)_{4,4} &= Z _1 f_4 + \tfrac{1}{2} ( f_3 Z _1 f_1 - f_1 Z _1 f_3  ) - \tfrac{1}{6}(f_1 (f_2 Z _1 f_1 - f_1 Z _1 f_2)) = J_H f (X_1 f_1), \\
    (\cP f)_{5,4} &= Z _1 f_5 + \tfrac{1}{2} ( f_3 Z _1 f_2 - f_2 Z _1 f_3  ) - \tfrac{1}{6}(f_2 (f_2 Z _1 f_1 - f_1 Z _1 f_2)) = J_H f (X_1 f_2), \\
    (\cP f)_{4,5} &= Z _2 f_4 + \tfrac{1}{2} ( f_3 Z _2 f_1 - f_1 Z _2 f_3  ) - \tfrac{1}{6}(f_1 (f_2 Z _2 f_1 - f_1 Z _2 f_2)) = J_H f (X_2 f_1), \text{ and} \\
    (\cP f)_{5,5} &= Z _2 f_5 + \tfrac{1}{2} ( f_3 Z _2 f_2 - f_2 Z _2 f_3  ) - \tfrac{1}{6}(f_2 (f_2 Z _2 f_1 - f_1 Z _2 f_2)) = J_H f (X_2 f_2).
\end{align*}
Here we have written $(\cP f)_{i,j}$ for the $(i,j)$-entry in the matrix $(\cP f)$ of $\cP f$ with respect to the basis $X_1|_0, X_2|_0, Y|_0, Z_1|_0, Z_2|_0$. The matrix $(\cP f)$ is of block-diagonal form and it is easily found that $\det (\cP f) = J_H ^5 f$. Though we do not use the observation, it may be worth noting that for the combinations of $i,j$ appearing in the above list, $(\cP f)_{i,j} = \left< f^* \theta_i , Y_j \right>$.

Before proceeding further, we require some notation: if $\fL$ is a stratified Lie group with $\mathrm{Lie}(\fl)=\bigoplus_{i\geq 1}\fl_i$ and $\Lambda \subset \fL$ is open, then $C_{\fL}^1 (\Lambda)$ is the collection of functions $h : \Lambda \to \mR$ such that the classical derivative $X h$ exists and is continuous on $\Lambda$ for all $X \in \fl_1$.

The following is Proposition 3.16 of \cite[p.~26]{SerraCassanoNotes}.

\begin{lem} \label{lem:scc1}
Let $\fL$ be a stratified Lie group and let $\Lambda \subset \fL$ be open. A function $h : \Lambda \to \mR$ belongs to $C_{\fL}^1 (\Lambda)$ if and only if the distributional derivative $X h$ is continuous for all $X \in \fl_1$. 
\end{lem}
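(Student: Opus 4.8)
The plan is to treat the two implications separately; the forward one is routine and the content lies in the converse. If $h \in C^1_{\fL}(\Lambda)$ then, for a basis $X_1,\dots,X_d$ of $\fl_1$, each classical derivative $X_j h$ is continuous, $h$ itself is continuous (it is locally Lipschitz with respect to the Carnot--Carath\'{e}odory distance, since its increment along any horizontal curve is controlled by $\max_j|X_j h|$ times the length of the curve), and a standard integration by parts along the integral curves of $X_j$, together with Fubini and the fact that $X_j$ has polynomial coefficients, shows the distributional derivative $X_j h$ coincides with the classical one; so it is continuous. I therefore concentrate on the converse.

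Suppose the distributional derivative $X h$ is continuous for every $X \in \fl_1$. Fix $\Lambda_0 \subset\subset \Lambda$ and let $\{h^\epsilon\}$ be the family of $C^\infty$-smooth functions on $\Lambda_0$ produced by the group-convolution regularization recalled in Section~\ref{sec:basics}, so that $h^\epsilon \to h$ in $\cD'(\Lambda_0)$ and $X h^\epsilon = (X h)^\epsilon$ for all $X \in \fl_1$. Since each $X_j h$ is continuous, $X_j h^\epsilon = (X_j h)^\epsilon \to X_j h$ uniformly on compact subsets of $\Lambda_0$; in particular the horizontal gradients of the $h^\epsilon$ are bounded uniformly in $\epsilon$ on each compact set. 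Because $\fL$ is a stratified (Carnot) group its Carnot--Carath\'{e}odory distance is finite and induces the manifold topology, so this uniform gradient bound makes $\{h^\epsilon\}$ equicontinuous on compact subsets of $\Lambda_0$; and the family is locally uniformly bounded, for otherwise equicontinuity would force $|h^{\epsilon_k}| \to \infty$ uniformly on a fixed open set along a subsequence, contradicting convergence in $\cD'(\Lambda_0)$ when tested against a nonnegative bump function. By Arzel\`{a}--Ascoli the $h^\epsilon$ subconverge locally uniformly, and any such limit must be $h$ since it is also a limit in $\cD'(\Lambda_0)$; hence the whole family converges locally uniformly and $h$ is (represented by) a continuous function.

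It remains to upgrade to classical derivatives. Fix $p \in \Lambda_0$ and $X = X_j$, and let $\gamma(t) = p\exp(tX)$ be the integral curve of $X$ through $p$, which stays in $\Lambda_0$ for $t$ small. As $h^\epsilon$ is smooth, the fundamental theorem of calculus along $\gamma$ gives $h^\epsilon(\gamma(t)) - h^\epsilon(p) = \int_0^t (X h^\epsilon)(\gamma(\tau))\,d\tau$. Letting $\epsilon \to 0$, the left-hand side tends to $h(\gamma(t)) - h(p)$ by the local uniform convergence just established, and the right-hand side tends to $\int_0^t (X h)(\gamma(\tau))\,d\tau$ because $X h^\epsilon \to X h$ uniformly on the compact arc $\gamma([0,t])$. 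Since $X h$ is continuous and $\tau \mapsto \gamma(\tau)$ is continuous, differentiating the resulting identity $h(\gamma(t)) - h(p) = \int_0^t (X h)(\gamma(\tau))\,d\tau$ at $t = 0$ shows that the classical derivative $X h(p)$ exists and equals the (continuous) distributional derivative $(X h)(p)$. Doing this at every $p$ and for every $X$ in a basis of $\fl_1$, and choosing for each $p$ a suitable $\Lambda_0$, yields $h \in C^1_{\fL}(\Lambda)$.

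The main obstacle I anticipate is the passage from control of the horizontal gradients of the $h^\epsilon$ to equicontinuity of the $h^\epsilon$ themselves: horizontal derivatives carry no information on a general manifold, and here one must use essentially that $\fL$ is a Carnot group, so that nearby points are joined by short horizontal curves and $|h^\epsilon(p) - h^\epsilon(q)| \le \big(\sup_K |\nabla_H h^\epsilon|\big)\, d_{\mathrm{cc}}(p,q)$ on a suitable compact $K$, with $d_{\mathrm{cc}}$ continuous for the Euclidean topology. Securing the uniform-in-$\epsilon$ boundedness needed for Arzel\`{a}--Ascoli without circularity is the other delicate point, and it is precisely where the convergence $h^\epsilon \to h$ in $\cD'(\Lambda_0)$ enters in an essential way.
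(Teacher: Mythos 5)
The paper offers no proof of this lemma: it is quoted verbatim as Proposition 3.16 of the cited Serra Cassano lecture notes, so your argument has to be judged on its own terms rather than against an in-paper proof. On those terms it is correct, and it is essentially the standard proof of that proposition: the forward direction via integration by parts along integral curves (using that left-invariant fields are divergence-free), and the converse via group-convolution mollification (which commutes with left-invariant derivatives), locally uniform convergence of the mollified horizontal gradients, the Carnot--Carath\'{e}odory estimate $|h^\epsilon(p)-h^\epsilon(q)|\le \sup_K|\nabla_H h^\epsilon|\, d_{\mathrm{cc}}(p,q)$ to get equicontinuity, Arzel\`{a}--Ascoli together with uniqueness of limits in $\cD'(\Lambda_0)$ to identify the locally uniform limit with $h$, and the fundamental theorem of calculus along integral curves to recover the classical derivatives. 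You correctly isolate the two places where the stratified structure is indispensable.

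Three small points are worth tightening. First, in the local boundedness step you should say explicitly that the uniform Lipschitz bound on a \emph{connected} compact set forces $h^{\epsilon_k}$ to have constant sign once $\inf|h^{\epsilon_k}|$ is large; that is what makes the pairing with a nonnegative bump function diverge. Second, for the distributional derivative to be defined $h$ must be in $L^1_{\loc}$, and the hypothesis then only determines $h$ up to a null set, so the honest conclusion of the converse is that $h$ admits a continuous representative lying in $C_{\fL}^1(\Lambda)$; this is harmless in the paper, where the lemma is applied to the already-continuous functions $(\cP f)_{i,j}$. Third, $C_{\fL}^1(\Lambda)$ asks for classical derivatives along \emph{every} $X\in\fl_1$, not only along a basis; your final step should simply be run for arbitrary $X=\sum_j c_jX_j$, for which the same computation works verbatim since $Xh^\epsilon=\sum_j c_jX_jh^\epsilon$ also converges locally uniformly.
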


This lemma plays an important role in the proof of the next result.

\begin{pro} \label{pro:c1vfc}
$(\cP f)_{4,4}, (\cP f)_{5,4}, (\cP f)_{4,5}, (\cP f)_{5,5} \in C_{\fC}^1 (\Upomega)$.
\end{pro}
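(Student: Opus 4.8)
The plan is to invoke Lemma~\ref{lem:scc1}, which reduces the claim to showing that, for $u$ equal to each of $(\cP f)_{4,4},(\cP f)_{5,4},(\cP f)_{4,5},(\cP f)_{5,5}$, the distributional derivatives $X_1u$ and $X_2u$ are represented by continuous functions on $\Upomega$. I describe the argument for $u=(\cP f)_{4,4}$ and for $X_1u$; the other cases are the same computation with the indices permuted and, for $(\cP f)_{4,5}$ and $(\cP f)_{5,5}$, with $Z_1$ replaced by $Z_2$.

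The idea is to compute $X_1u$ not from the factorization $u=(J_Hf)(X_1 f_1)$, whose two factors are merely continuous, but from the left-hand expression
\[
    u = Z_1 f_4 + \tfrac12\big(f_3\,Z_1 f_1 - f_1\,Z_1 f_3\big) - \tfrac16\,f_1\big(f_2\,Z_1 f_1 - f_1\,Z_1 f_2\big),
\]
a finite sum of products of the $C^1$-functions $f_j$ with the continuous functions $Z_\ell f_j$. Differentiating this distributionally with $X_1$ and applying the Leibniz rule (valid since the $f_j$ are $C^1$ and the fields $X_1,X_2$ are divergence-free on $\fC$), every resulting term is continuous except those in which $X_1$ lands on a factor $Z_1 f_j$. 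Because $Z_1=\partial_{z_1}$ is central in $\fc$, one has $X_1 Z_1 = Z_1 X_1$ as operators on $\cD'(\Upomega)$, so these remaining terms all take the form $Z_1(X_1 f_j)$ with $j\in\{1,2,3,4\}$.

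To eliminate the terms with $j=3$ and $j=4$, I would substitute the contact identities: $(\cP f)_{3,1}=0$ gives $X_1 f_3 = -\tfrac12(f_2 X_1 f_1 - f_1 X_1 f_2)$, and feeding this into $(\cP f)_{4,1}=0$ gives $X_1 f_4 = -\tfrac12 f_3 X_1 f_1 - \tfrac1{12}f_1(f_2 X_1 f_1 - f_1 X_1 f_2)$. These are identities of continuous functions, hence of distributions, so they may be substituted into the expression for $X_1u$. After substitution the only second-order objects that survive are $Z_1(X_1 f_1)$ and $Z_1(X_1 f_2)$, and collecting terms shows that the coefficient of each vanishes identically: the coefficient of $Z_1(X_1 f_1)$ reduces to $\big(-\tfrac1{12}+\tfrac14-\tfrac16\big)f_1 f_2 = 0$ and that of $Z_1(X_1 f_2)$ to $\big(\tfrac1{12}-\tfrac14+\tfrac16\big)f_1^2 = 0$. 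Hence $X_1u$ equals a polynomial expression in the continuous functions $f_j$, $X_1 f_j$, $Z_1 f_j$, and is therefore continuous. The same computation with $X_2$ in place of $X_1$ (using $(\cP f)_{3,2}=0$ and $(\cP f)_{4,2}=0$) shows that $X_2u$ is continuous, and Lemma~\ref{lem:scc1} then gives $u\in C_{\fC}^1(\Upomega)$. For $(\cP f)_{5,4}$ one instead uses $(\cP f)_{5,1}=0$ (simplified via $(\cP f)_{3,1}=0$); for $(\cP f)_{4,5}$ and $(\cP f)_{5,5}$ one works with $Z_2$ and the $k=2$ contact identities. In each case the same cancellation occurs, because the coefficients appearing in the contact identities do not depend on the horizontal direction $X_k$.

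The main obstacle is exactly this last cancellation. A priori $X_1u$ involves the distributions $Z_1(X_1 f_1)$ and $Z_1(X_1 f_2)$ --- genuine second-order derivatives of a map assumed only $C^1$, for which there is no continuity to be had in isolation --- and the content of the proposition is that these terms drop out once the contact relations are used. Carrying this out requires the explicit algebra above, together with some care over the distributional operations employed: commuting $X_k$ past $Z_\ell$, applying the Leibniz rule against $C^1$-coefficients, and substituting identities of continuous functions inside distributional expressions.
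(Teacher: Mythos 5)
Your argument is correct and rests on the same two ideas as the paper's proof: the commutation $X_kZ_\ell=Z_\ell X_k$ (since $Z_1,Z_2$ are central), and the use of the vanishing entries $(\cP f)_{3,k}=(\cP f)_{4,k}=(\cP f)_{5,k}=0$ to eliminate the genuinely second-order terms $Z_\ell(X_kf_j)$; I checked your coefficient arithmetic for $(\cP f)_{4,4}$ and the cancellations $(-\tfrac1{12}+\tfrac14-\tfrac16)f_1f_2=0$ and $(\tfrac1{12}-\tfrac14+\tfrac16)f_1^2=0$ are right. The only substantive difference is procedural. The paper mollifies the $f_j$ first and then subtracts $Z_\ell\cP^{\epsilon}_{i,k}$ (the regularization of the derivative of a quantity that vanishes in the limit) from $X_k\cP^{\epsilon}_{i,\ell}$: since the two expressions are the same differential polynomial evaluated on $Y_\ell$ and $Y_k$ respectively, \emph{all} second-order terms cancel identically at the smooth level, the surviving expression involves only first derivatives, and one simply passes to the locally uniform limit. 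Your version performs the equivalent substitution directly in $\cD'(\Upomega)$, which forces you to invoke a Leibniz rule for products of $C^1$ functions with distributions of order one (e.g.\ $X_1(f_3\,Z_1f_1)=(X_1f_3)Z_1f_1+f_3\,X_1Z_1f_1$) and to multiply such distributions by $C^1$ coefficients. That is legitimate but is exactly the point that requires justification by approximation -- in other words, the mollification step you omit is not a convenience in the paper but the mechanism that makes these formal manipulations rigorous. If you either add that approximation argument or reorganize as the paper does (cancel the second-order terms before taking any limit), the proof is complete.
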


\begin{proof}
Let $\Upomega_0 \subset\subset \Upomega$ be open. There exists $\epsilon_0 > 0$ such that for each $k=1,\ldots,5$ there is a family $\{f_k^\epsilon \,|\, 0<\epsilon < \epsilon_0 \}$ of $C^\infty$-smooth functions defined on $\Upomega_0$ with (i) $f_k^\epsilon \to f_k|_{\Upomega_0}$ locally uniformly as $\epsilon \to 0$ and (ii) $Y_j f_k^\epsilon \to Y_j f_k |_{\Upomega_0}$ locally uniformly as $\epsilon \to 0$ for all $j = 1,\ldots,5$. This can be achieved using the underlying Euclidean structure, or using a convolution defined in terms of group operations as developed in \cite{HSHG}. In the following we take $0 < \epsilon < \epsilon_0$ always.

Define $\cP_{i,j}^\epsilon$ to be $(\cP f)_{i,j}$ with each instance of the component function $f_k$ replaced with $f_k^\epsilon$. For example,
\[
    \cP_{4,5}^{\epsilon} = Z_2 f_4^\epsilon + \tfrac{1}{2} \left( f_3^\epsilon Z_2 f_1^\epsilon - f_1^\epsilon Z_2 f_3^\epsilon \right) - \tfrac{1}{6}\left(f_1^\epsilon f_2^\epsilon Z_2 f_1^\epsilon - (f_1^\epsilon)^2 Z_2 f_2^\epsilon\right).
\]
Note, $\cP_{i,j}^\epsilon$ converges to $(\cP f)_{i,j}$ locally uniformly so $\cP_{i,j}^\epsilon \to (\cP f)_{i,j}$ in $\cD' (\Upomega_0)$.

We observe that
\begin{align*}
    X_2 \cP_{4,5}^\epsilon = X_2 Z_2 f_4^\epsilon &+ \tfrac{1}{2}\left(X_2 f_3^\epsilon Z_2 f_1^\epsilon + f_3^\epsilon X_2 Z_2 f_1^\epsilon - X_2 f_1^\epsilon Z_2 f_3^\epsilon - f_1^\epsilon X_2 Z_2 f_3^\epsilon \right) \\
    &-\tfrac{1}{6}\left( f_2^\epsilon X_2 f_1^\epsilon Z_2 f_1^\epsilon + f_1^\epsilon X_2 f_2^\epsilon Z_2 f_1^\epsilon + f_1^\epsilon f_2^\epsilon X_2 Z_2 f_1^\epsilon \right)\\
    &+\tfrac{1}{6}\left( 2 f_1^\epsilon X_2 f_1^\epsilon Z_2 f_2^\epsilon + (f_1^\epsilon)^2 X_2 Z_2 f_2^\epsilon \right).
\end{align*}
This is nothing but repeated application of the product rule. Now we compute,
\begin{align*}
    Z_2 \cP_{4,2}^\epsilon = X_2 Z_2 f_4^\epsilon &+ \tfrac{1}{2}\left( X_2 f_1^\epsilon Z_2 f_3^\epsilon + f_3^\epsilon X_2 Z_2 f_1^\epsilon - X_2 f_3^\epsilon Z_2 f_1^\epsilon - f_1^\epsilon X_2 Z_2 f_3^\epsilon \right) \\
    &-\tfrac{1}{6}\left( f_2^\epsilon X_2 f_1^\epsilon Z_2 f_1^\epsilon + f_1^\epsilon X_2 f_1^\epsilon Z_2 f_2^\epsilon + f_1^\epsilon f_2^\epsilon X_2 Z_2 f_1^\epsilon \right) \\
    &+\tfrac{1}{6}\left( 2 f_1^\epsilon X_2 f_2^\epsilon Z_2 f_1^\epsilon + (f_1^\epsilon)^2 X_2 Z_2 f_2^\epsilon \right).
\end{align*}
Here we have used that $Z_2$ is in the center of $\fc$ so that $X_2$ and $Z_2$ commute. It follows,
\begin{align*}
    X_2 \cP_{4,5}^\epsilon - Z_2 \cP_{4,2}^\epsilon &= \tfrac{1}{2}\left(X_2 f_3^\epsilon Z_2 f_1^\epsilon - X_2 f_1^\epsilon Z_2 f_3^\epsilon - X_2 f_1^\epsilon Z_2 f_3^\epsilon + X_2 f_3^\epsilon Z_2 f_1^\epsilon \right) \\
    &\,-\tfrac{1}{6}\left( f_1^\epsilon X_2 f_2^\epsilon Z_2 f_1^\epsilon - f_1^\epsilon X_2 f_1^\epsilon Z_2 f_2^\epsilon \right) \\
    &\,+\tfrac{1}{6}\left( 2 f_1^\epsilon X_2 f_1^\epsilon Z_2 f_2^\epsilon - 2 f_1^\epsilon X_2 f_2^\epsilon Z_2 f_1^\epsilon \right)\\
    &= X_2 f_3^\epsilon Z_2 f_1^\epsilon - X_2 f_1^\epsilon Z_2 f_3^\epsilon +\tfrac{1}{2}\left( f_1^\epsilon X_2 f_1^\epsilon Z_2 f_2^\epsilon - f_1^\epsilon X_2 f_2^\epsilon Z_2 f_1^\epsilon \right).
\end{align*}
This last expression involves only first derivatives of the $f_k^\epsilon$. Hence,
\begin{align*}
    X_2 (\cP f)_{4,5} &= \lim_{\epsilon \to 0} \left(X_2 \cP_{4,5}^\epsilon - Z_2 \cP_{4,2}^\epsilon\right)\\
    &= X_2 f_3 Z_2 f_1 - X_2 f_1 Z_2 f_3 +\tfrac{1}{2}\left( f_1 X_2 f_1 Z_2 f_2 - f_1 X_2 f_2 Z_2 f_1 \right)
\end{align*}
in $\cD'(\Upomega_0)$. Already this shows the distributional derivative $X_2 (\cP f)_{4,5}$ is continuous. Pushing further, we find it admits a cleaner description. Since $(\cP f)_{3,2} = 0$,  
\[
    X_2 f_3 = \tfrac{1}{2}\left( f_1 X_2 f_2 - f_2 X_2 f_1 \right)
\]
and we find
\begin{align*}
    X_2 (\cP f)_{4,5} &= \tfrac{1}{2}\left( f_1 X_2 f_2 - f_2 X_2 f_1 \right) Z_2 f_1 - X_2 f_1 Z_2 f_3 +\tfrac{1}{2}\left( f_1 X_2 f_1 Z_2 f_2 - f_1 X_2 f_2 Z_2 f_1 \right)\\
    &= -X_2 f_1 \left( Z_2 f_3 + \tfrac{1}{2} \left( f_2 Z_2 f_1 - f_1 Z_2 f_2 \right) \right) \\
    &= -X_2 f_1 \left< f^* \theta , Z_2 \right>.
\end{align*}
In a similar way we discover
\begin{equation} \label{eq:nicereps}
\begin{aligned}
    X_1 (\cP f)_{4,4} &= -X_1 f_1 \left< f^* \theta , Z_1 \right>, \\
    X_2 (\cP f)_{4,4} &= -X_2 f_1 \left< f^* \theta , Z_1 \right>, \\
    X_1 (\cP f)_{5,4} &= -X_1 f_2 \left< f^* \theta , Z_1 \right>, \\
    X_2 (\cP f)_{5,4} &= -X_2 f_2 \left< f^* \theta , Z_1 \right>, \\
    X_1 (\cP f)_{4,5} &= -X_1 f_1 \left< f^* \theta , Z_2 \right>, \\
    X_2 (\cP f)_{4,5} &= -X_2 f_1 \left< f^* \theta , Z_2 \right>, \\
    X_1 (\cP f)_{5,5} &= -X_1 f_2 \left< f^* \theta , Z_2 \right>, \text{ and}\\
    X_2 (\cP f)_{5,5} &= -X_2 f_2 \left< f^* \theta , Z_2 \right>.
\end{aligned}
\end{equation}
(We have included the already discussed $X_2 (\cP f)_{4,5}$ for the sake of a complete list.)

That $(\cP f)_{4,4}, (\cP f)_{5,4}, (\cP f)_{4,5}, (\cP f)_{5,5}\in C_{\fC}^1 (\Upomega_0)$ now follows from Lemma \ref{lem:scc1}. The classical first horizontal derivatives are given by list \eqref{eq:nicereps}. As $\Upomega_0$ was an arbitrary open set compactly contained in $\Upomega$, it must be that $(\cP f)_{4,4}, (\cP f)_{5,4}, (\cP f)_{4,5}, (\cP f)_{5,5} \in C_{\fC}^1 (\Upomega)$ as desired.
\end{proof}

Let $\Upsilon = \alpha_{1,1} X_1 + \alpha_{1,2} X_2 + \alpha_2 Y + \alpha_{3,1} Z_1 + \alpha_{3,2}Z_2$ be a generalized vector field on $\Upomega' = f(\Upomega)$. By definition, $\Upsilon$ is contact if 
\begin{gather*}
    X_1 \alpha_2 = -\alpha_{1,2}, \quad X_2 \alpha_2 = \alpha_{1,1}, \\
    X_1 \alpha_{3,1} = \alpha_2,\quad X_1 \alpha_{3,2} = 0,\quad X_2 \alpha_{3,1} = 0,\quad\text{and}\quad X_2 \alpha_{3,2} = \alpha_2.
\end{gather*}

\begin{pro} \label{pro:inducedcontactvf} 
If 
\[
    \alpha = [(J_H f)\circ g] [(X_1 f_1) \circ g] \quad\text{and}\quad \beta = [(J_H f) \circ g ][(X_1 f_2) \circ g]
\]
or
\[
    \alpha = [(J_H f)\circ g] [(X_2 f_1) \circ g] \quad\text{and}\quad \beta = [(J_H f) \circ g ][(X_2 f_2) \circ g]
\]
then
\[
    \Upsilon = (X_2 X_1 \alpha) X_1 - (X_1 X_2 \beta) X_2 + (X_1 \alpha) Y + \alpha Z_1 + \beta Z_2
\]
is a contact generalized vector field on $\Upomega'$.
\end{pro}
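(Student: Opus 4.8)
The plan is to recognize $\alpha$ and $\beta$ as compositions of matrix entries of $\cP f$ with $g$, to reduce ``$\Upsilon$ is contact'' to three scalar differential identities, and to extract those from \eqref{eq:nicereps} by a chain rule for horizontally $C^1$ functions. First I would observe that in the first alternative $\alpha = (\cP f)_{4,4}\circ g$ and $\beta = (\cP f)_{5,4}\circ g$, while in the second $\alpha = (\cP f)_{4,5}\circ g$ and $\beta = (\cP f)_{5,5}\circ g$; this is exactly the content of the identities for $(\cP f)_{4,4},\dots,(\cP f)_{5,5}$ recorded before Proposition \ref{pro:c1vfc}. By Proposition \ref{pro:c1vfc} each of those four entries lies in $C_{\fC}^1 (\Upomega)$, and its classical horizontal derivatives are the ones in \eqref{eq:nicereps}; crucially, in the first alternative they all carry the common factor $\langle f^* \theta , Z_1\rangle$ and in the second the common factor $\langle f^* \theta , Z_2 \rangle$. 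Matching the proposed $\Upsilon$ against the contact equations displayed just before the statement, the coefficients of $X_1$, $X_2$ and $Y$ in $\Upsilon$ are precisely those that $\alpha_{3,1} = \alpha$ and $\alpha_{3,2} = \beta$ force once one knows
\[
    X_2 \alpha = 0, \qquad X_1 \beta = 0, \qquad X_1 \alpha = X_2 \beta ;
\]
indeed the remaining contact equations then hold either by the very definitions of the coefficients of $\Upsilon$ or by applying $X_1$ to the third identity. So everything reduces to these three identities. (The field $\Upsilon$ is, heuristically, the pushforward $f_* Z_1$, resp.\ $f_* Z_2$, of a central generator, and such a pushforward ought to be contact since left translation by $\exp(tZ_1)$ is; this is the intuition behind the proposition.)

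The engine is the chain rule: if $u \in C_{\fC}^1 (\Upomega)$ then $u \circ g \in C_{\fC}^1 (\Upomega')$ and, for $i = 1,2$,
\[
    X_i (u\circ g) = (X_i g_1)\bigl( (X_1 u)\circ g \bigr) + (X_i g_2)\bigl( (X_2 u)\circ g\bigr) .
\]
The reason only $X_1 u$ and $X_2 u$ enter is that $g$ is a contact mapping, so $Dg_q (X_i|_q)$ is horizontal, with $X_1,X_2$-coordinates $(X_i g_1)(q)$, $(X_i g_2)(q)$. When $u$ is $C^1$ in the Euclidean sense this is the classical chain rule; in general I would regularize $u$ on a subdomain $\Upomega_0 \subset\subset \Upomega$ chosen so that $g$ carries a given $\Upomega_0' \subset\subset \Upomega'$ into $\Upomega_0$, apply the classical rule to the smooth approximants, and pass to the limit in $\cD'(\Upomega_0')$ exactly as in the proof of Proposition \ref{pro:c1vfc}; the limiting right-hand side is continuous, so Lemma \ref{lem:scc1} yields the assertion. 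Applying the same rule to $u = f_1$ and $u = f_2$ (which are $C^1$ in the Euclidean sense) identifies, for each $i$, the combination $(X_i g_1)\bigl((X_1 f_1)\circ g\bigr) + (X_i g_2)\bigl((X_2 f_1)\circ g\bigr)$ with $X_i (f_1 \circ g)$, and likewise with $f_2$.

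Now I would do the computation in the first alternative; the second is word for word the same with $Z_1$ replaced by $Z_2$. Feeding \eqref{eq:nicereps} into the chain rule for $u = (\cP f)_{4,4}$ gives $X_i \alpha = -\bigl(\langle f^* \theta , Z_1 \rangle \circ g\bigr)\, X_i (f_1 \circ g)$, and for $u = (\cP f)_{5,4}$ it gives $X_i \beta = -\bigl(\langle f^*\theta , Z_1\rangle \circ g\bigr)\, X_i (f_2 \circ g)$. Because $f \circ g$ is the identity of $\Upomega'$, the functions $f_1 \circ g$ and $f_2 \circ g$ are the first two coordinate functions, so $X_1 (f_1 \circ g) = X_2 (f_2 \circ g) = 1$ and $X_2 (f_1 \circ g) = X_1 (f_2 \circ g) = 0$. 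Hence $X_2 \alpha = 0$, $X_1 \beta = 0$, and $X_1 \alpha = X_2 \beta = -\bigl(\langle f^* \theta , Z_1 \rangle \circ g\bigr)$, which are the three required identities; these being identities of continuous functions, applying $X_1$ distributionally to the last one yields $X_1 X_1 \alpha = X_1 X_2 \beta$, and a direct check then confirms that the displayed $\Upsilon$ satisfies every contact equation. The second alternative produces the same conclusion with $\langle f^*\theta , Z_2\rangle$ in place of $\langle f^*\theta , Z_1\rangle$.

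The hard part will be the chain rule of the middle paragraph: $(\cP f)_{4,4},\dots,(\cP f)_{5,5}$ are only known to be horizontally $C^1$ rather than $C^1$ in the Euclidean sense, so the Euclidean chain rule is not immediately available and the regularization is genuinely needed in order to make sense of $X_i (u\circ g)$ and its transformation law. Once that is secured, what remains is the bookkeeping of the preceding paragraph together with the trivial action of $X_1, X_2$ on coordinate functions.
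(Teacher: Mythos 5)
Your proof is correct, but it reaches the three key identities $X_2\alpha=0$, $X_1\beta=0$, $X_1\alpha=X_2\beta$ by a genuinely different route than the paper. The paper transfers everything to the inverse map: using $(\cP (f\circ g))=[(\cP f)\circ g](\cP g)$ and inverting the bottom block of $(\cP g)$, it rewrites $\alpha$ and $\beta$ as rational expressions in $(\cP g)_{4,4},\dots,(\cP g)_{5,5}$, then differentiates with the quotient rule, feeds in \eqref{eq:nicereps} \emph{for $g$}, and verifies the identities by explicit cancellation of four-term sums. You instead keep everything on the $f$ side: you identify $\alpha,\beta$ as $(\cP f)_{i,j}\circ g$, establish a first-order chain rule $X_i(u\circ g)=(X_ig_1)\bigl((X_1u)\circ g\bigr)+(X_ig_2)\bigl((X_2u)\circ g\bigr)$ for $u\in C_{\fC}^1(\Upomega)$ precomposed with the contact map $g$, and then the identities fall out of \eqref{eq:nicereps} for $f$ together with $X_i(f_k\circ g)=\delta_{ik}$, since $f\circ g$ is the identity. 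Your chain rule is the one genuinely new ingredient, and your regularization sketch for it is adequate: the smooth approximants must be produced by the group convolution (so that $X_ju^\epsilon=(X_ju)^\epsilon\to X_ju$ locally uniformly, as $u$ is only horizontally $C^1$), the Euclidean chain rule for $u^\epsilon\circ g$ collapses to horizontal derivatives of $u^\epsilon$ precisely because $g$ is contact, and Lemma \ref{lem:scc1} upgrades the distributional limit. What your approach buys is conceptual transparency --- the vanishing of $X_2\alpha$ and $X_1\beta$ is seen at once as $X_2(f_1\circ g)=X_1(f_2\circ g)=0$ rather than as an unexplained cancellation --- and it also produces the cleaner closed form $X_1\alpha=X_2\beta=-\langle f^*\theta,Z_j\rangle\circ g$. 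What the paper's route buys is that it needs no auxiliary chain-rule lemma beyond what Proposition \ref{pro:c1vfc} already supplies, at the cost of heavier bookkeeping. Your reduction of the contact conditions to the three identities, including the sign check $X_1\alpha_2=X_1X_1\alpha=X_1X_2\beta=-\alpha_{1,2}$, matches the paper's.
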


\begin{proof}
The form of $\Upsilon$ implies the proof reduces (in either case) to showing both $X_1 \alpha = X_2 \beta$ and $X_1 \beta = 0 = X_2 \alpha$ in $\cD' (\Upomega')$. We work the case of $\alpha = [(J_H f)\circ g] [(X_2 f_1) \circ g]$ and $\beta = [(J_H f) \circ g ][(X_2 f_2) \circ g]$ in detail. 

In the current circumstances, that $(\cP (f \circ g)) = [(\cP f)\circ g](\cP g)$ and that $(\cP g)$ is invertible are basic facts. They allow us to identify
\[
    \alpha = -\frac{X_2 g_1}{J_H^2 g} = -\frac{J_H g (X_2 g_1)}{J_H^3 g} = -\frac{(\cP g)_{4,5}}{(\cP g)_{4,4} (\cP g)_{5,5} - (\cP g)_{4,5}(\cP g)_{5,4}} 
\]  
and
\[
    \beta = \frac{X_1 g_1}{J_H^2 g } = \frac{J_H g (X_1 g_1)}{J_H^3 g} = \frac{(\cP g)_{4,4}}{(\cP g)_{4,4}(\cP g)_{5,5} - (\cP g)_{4,5}(\cP g)_{5,4}}.
\]
We now start writing $\cP_{i,j}$ for $(\cP g)_{i,j}$. By Proposition \ref{pro:c1vfc} and \eqref{eq:nicereps} we have
\begin{align*}
    - X_2 \alpha &= \frac{X_2 \cP_{4,5}(\cP_{4,4}\cP_{5,5} - \cP_{4,5}\cP_{5,4})}{J_H^6 g} \\
    &\quad- \frac{\cP_{4,5}((X_2 \cP_{4,4})\cP_{5,5} + \cP_{4,4}(X_2 \cP_{5,5}) - (X_2\cP_{4,5})\cP_{5,4} - \cP_{4,5}(X_2 \cP_{5,4}))}{J_H^6 g} \\
    &=\frac{(X_2 \cP_{4,5})\cP_{4,4}\cP_{5,5}}{J_H^6 g} - \frac{(X_2 \cP_{4,4})\cP_{4,5}\cP_{5,5}}{J_H^6 g} - \frac{(X_2 \cP_{5,5})\cP_{4,4}\cP_{4,5}}{J_H^6 g} + \frac{(X_2 \cP_{5,4})\cP_{4,5}^2}{J_H^6 g} \\
    &= -\frac{X_2 g_1 \left< g^* \theta , Z_2 \right>\cP_{4,4}\cP_{5,5}}{J_H^6 g} + \frac{X_2 g_1 \left< g^* \theta , Z_1 \right>\cP_{4,5}\cP_{5,5}}{J_H^6 g} \\
    &\quad + \frac{X_2 g_2 \left< g^* \theta , Z_2 \right>\cP_{4,4}\cP_{4,5}}{J_H^6 g} - \frac{X_2 g_2 \left< g^* \theta , Z_1 \right>\cP_{4,5}^2}{J_H^6 g} \\
    &=-\frac{X_1 g_1 X_2 g_1 X_2 g_2 \left< g^* \theta , Z_2 \right>}{J_H^4 g} + \frac{(X_2 g_1)^2 X_2 g_2 \left< g^* \theta , Z_1 \right>}{J_H^4 g} \\
    &\quad + \frac{X_1 g_1 X_2 g_1 X_2 g_2 \left< g^* \theta , Z_2 \right>}{J_H^4 g} - \frac{(X_2 g_1)^2 X_2 g_2 \left< g^* \theta , Z_1 \right>}{J_H^4 g} \\
    &= 0.
\end{align*}
Similar calculations lead to
\begin{align*}
    X_1 \beta &=\frac{X_1 g_1 X_2 g_1 X_1 g_2 \left< g^* \theta , Z_1 \right>}{J_H^4 g} + \frac{(X_1 g_1)^2 X_1 g_2 \left< g^* \theta , Z_2 \right>}{J_H^4 g} \\
    &\quad - \frac{(X_1 g_1)^2 X_1 g_2 \left< g^* \theta , Z_2 \right>}{J_H^4 g} - \frac{X_1 g_1 X_2 g_1 X_1 g_2 \left< g^* \theta , Z_1 \right>}{J_H^4 g} \\
    &= 0.
\end{align*}
Furthermore, it is straightforward to check that
\begin{align*}
    X_1 \alpha &=\frac{(X_1 g_1)^2 X_2 g_2 \left< g^* \theta , Z_2 \right>}{J_H^4 g} - \frac{X_1 g_1 X_2 g_1 X_2 g_2 \left< g^* \theta , Z_1 \right>}{J_H^4 g} \\
    &\quad - \frac{X_1 g_1 X_2 g_1 X_1 g_2 \left< g^* \theta , Z_2 \right>}{J_H^4 g} + \frac{(X_2 g_1)^2 X_1 g_2 \left< g^* \theta , Z_1 \right>}{J_H^4 g} \\
    &= X_2 \beta.
\end{align*}

The case of $\alpha = [(J_H f)\circ g] [(X_1 f_1) \circ g]$ and $\beta = [(J_H f) \circ g ][(X_1 f_2) \circ g]$ is left to the reader.
\end{proof}

We are now ready to prove the main result of this paper. In the proof we write $\Delta_{\fC}$ for the sub-elliptic Laplacian on $\fC$, $\Delta_{\fC} = X_1 X_1 + X_2 X_2$.

\begin{thm}\label{thm:end}
$f \in C^{\infty}(\Upomega)$.
\end{thm}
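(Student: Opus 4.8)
\section*{Proof proposal}

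The plan is to turn the rigidity of $\fC$ into smoothness of $g=f^{-1}$, one component at a time, and then transfer this to $f$ by the inverse function theorem. The first move is to feed the right functions into Proposition~\ref{pro:inducedcontactvf}. Set
\[
    \alpha_1 = (\cP f)_{4,4}\circ g,\quad \beta_1 = (\cP f)_{5,4}\circ g,\quad \alpha_2 = (\cP f)_{4,5}\circ g,\quad \beta_2 = (\cP f)_{5,5}\circ g .
\]
Then $\alpha_1 = [(J_H f)\circ g][(X_1 f_1)\circ g]$, $\beta_1 = [(J_H f)\circ g][(X_1 f_2)\circ g]$, and $\alpha_2,\beta_2$ are the $X_2$-analogues, so Proposition~\ref{pro:inducedcontactvf} produces two contact generalized vector fields on $\Upomega'$ whose $Z_1,Z_2$-components are $(\alpha_1,\beta_1)$ and $(\alpha_2,\beta_2)$. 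Since $\fC$ is rigid, Proposition~\ref{pro:contactvfpoly} forces every component of these generalized vector fields to be a polynomial on $\Upomega'$; in particular $\alpha_1,\beta_1,\alpha_2,\beta_2$ are polynomials. Equivalently, the matrix $B := (J_H f\cdot M)\circ g$ with $M = \left(\begin{smallmatrix} X_1 f_1 & X_2 f_1 \\ X_1 f_2 & X_2 f_2\end{smallmatrix}\right)$ has polynomial entries, and $B$ is exactly the lower-right block of $(\cP f)\circ g$ corresponding to $Z_1,Z_2$.

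The second step is to invert $B$. As was used already in the proof of Proposition~\ref{pro:inducedcontactvf}, $(\cP f)\circ g = (\cP g)^{-1}$; and since $\det(\cP f) = J_H^5 f$ is nowhere zero, $J_H f$ never vanishes, so $\det B = ((J_H f)\circ g)^3$ is a nowhere-vanishing polynomial on $\Upomega'$. Consequently $(J_H f)\circ g = (\det B)^{1/3}$ and $J_H g = (\det B)^{-1/3}$ are $C^\infty$ (indeed real-analytic) on $\Upomega'$, and $B^{-1} = \mathrm{adj}(B)/\det B$ is $C^\infty$. But $B^{-1}$ is the $Z_1,Z_2$-block of $\cP g$, namely $B^{-1} = J_H g\cdot M_g$ with $M_g = \left(\begin{smallmatrix} X_1 g_1 & X_2 g_1 \\ X_1 g_2 & X_2 g_2\end{smallmatrix}\right)$; dividing by the smooth, nowhere-zero $J_H g$ gives $X_i g_j \in C^\infty(\Upomega')$ for all $i,j\in\{1,2\}$.

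Finally I would bootstrap through the Pansu identities using the hypoellipticity of $\Delta_\fC$ (Hörmander's theorem). Since $X_1 g_1, X_2 g_1 \in C^\infty(\Upomega')$, the distribution $\Delta_\fC g_1 = X_1(X_1 g_1) + X_2(X_2 g_1)$ is $C^\infty$, hence $g_1 \in C^\infty(\Upomega')$; likewise $g_2 \in C^\infty(\Upomega')$. As the inverse of a $C^1$ contact mapping, $g$ is itself a $C^1$ contact mapping, so it satisfies the analogues of the displayed Pansu identities with $g$ in place of $f$; from $(\cP g)_{3,1} = (\cP g)_{3,2} = 0$ one solves for $X_1 g_3, X_2 g_3$ in terms of $g_1,g_2$ and their (now smooth) horizontal derivatives, so $X_i g_3 \in C^\infty$, whence $\Delta_\fC g_3 \in C^\infty$ and $g_3 \in C^\infty(\Upomega')$. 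Using in addition $(\cP g)_{4,1} = (\cP g)_{4,2} = 0$ and $(\cP g)_{5,1} = (\cP g)_{5,2} = 0$, the same argument gives $X_i g_4, X_i g_5 \in C^\infty$ and then $g_4, g_5 \in C^\infty(\Upomega')$. Thus $g \in C^\infty(\Upomega')$; since $g$ is a homeomorphism onto $\Upomega$ with everywhere-invertible differential (from $Df\cdot Dg = I$), the inverse function theorem gives $f = g^{-1} \in C^\infty(\Upomega)$.

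The main obstacle is the opening identification: recognising that the four ``contactifications'' supplied by Proposition~\ref{pro:inducedcontactvf} are exactly the entries of the $Z$-block of $(\cP f)\circ g = (\cP g)^{-1}$, so that rigidity makes that block polynomial, and then that inverting a $2\times 2$ matrix whose determinant is a nowhere-vanishing polynomial upgrades the polynomial block of $(\cP g)^{-1}$ to a $C^\infty$ block of $\cP g$. After that the argument is a routine cascade through the Pansu identities and the hypoellipticity of $\Delta_\fC$; the only point needing a little care is to check at each stage that the horizontal derivatives in play are genuine continuous derivatives of a $C^1$ map, so that the distributional and classical computations agree.
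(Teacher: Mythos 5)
Your proposal is correct and follows essentially the same route as the paper: Propositions~\ref{pro:inducedcontactvf} and \ref{pro:contactvfpoly} make the $Z$-block of $(\cP g)^{-1}$ polynomial, the nonvanishing determinant upgrades $J_H g$ and then $X_j g_i$ ($i,j\le 2$) to $C^\infty$, and hypoellipticity of $\Delta_{\fC}$ plus the vanishing Pansu identities bootstrap smoothness through all components of $g$ and hence $f$. Packaging the four polynomial quantities as the matrix $B$ and finishing with the inverse function theorem rather than the paper's appeal to symmetry are only cosmetic differences.
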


\begin{proof}
From Proposition \ref{pro:inducedcontactvf} and Proposition \ref{pro:contactvfpoly} we have that $X_j g_i / J_H^2 g$ is a polynomial for each combination of $i,j = 1,2$. Consequently, $J_H^{-3} g$ is a polynomial. Since $0 < J_H g < \infty$ on $\Upomega'$ it follows $J_H g$ is $C^\infty$-smooth on $\Upomega'$. We now see that each $X_j g_i$, $1\leq i,j \leq 2$, is $C^\infty$-smooth. This is enough to conclude that $\Delta_{\fC} g_i$ is $C^\infty$-smooth for each $i=1,2$. Because $\Delta_{\fC}$ is a hypoelliptic operator this gives $g_i$ is $C^\infty$-smooth on $\Upomega'$ for each $i = 1,2$. At this point we invoke that $(\cP g)_{3,1}$, $(\cP g)_{4,1}$, $(\cP g)_{5,1}$, $(\cP g)_{3,2}$, $(\cP g)_{4,2}$, and $(\cP g)_{5,2}$ are all zero to find that $X_j g_i$ is $C^\infty$-smooth for all $j=1,2$ and $i = 3,4,5$. Arguing as before, we conclude that $g_i$ is $C^\infty$-smooth for all $i=3,4,5$, hence for all $i = 1,2,3,4,5$. Clearly, $f$ is $C^\infty$-smooth by the symmetry of the situation.
\end{proof}

\begin{rem}
Use of $\Delta_{\fC}$ in the proof of Theorem \ref{thm:end} could be considered heavy-handed, however, we find it a transparent argument. In any case, it is interesting to note that though $\Delta_{\fC}$ is hypoelliptic by H\"{o}rmander's theorem, it is not analytic-hypoelliptic by \cite{NonAnalyticHypoelliptic}. 
\end{rem}

\bibliographystyle{plain}
\bibliography{aabibdb} 

\end{document}